\documentclass{article}

\usepackage{amsmath, amsthm, amsfonts}
\usepackage{amssymb} 
\usepackage{graphicx}
\usepackage{dcolumn}
\usepackage{bm}
\usepackage{amsmath}
\usepackage{amssymb}
\usepackage{amsfonts}
\usepackage[hang,raggedright]{subfigure}
\usepackage[lined,ruled]{algorithm2e}
\usepackage{color}
\usepackage{url}
\usepackage{graphicx}
\usepackage{booktabs}
\usepackage{bm,times}
\usepackage{stmaryrd}

\newcommand{\figref}[1]{F{\textsc{ig}}.~\ref{#1}}

\setlength{\textwidth}{6.25in}

\newtheorem{theorem}{Theorem}
\newtheorem{lemma}{Lemma}


\newcommand{\wt}[1]{\widetilde{#1}}
\newcommand{\wh}[1]{\widehat{#1}}

\newcommand{\pars}[1]{\left( {#1} \right)}

\newcommand{\dd}[1]{\mathrm{d}{#1}}

\newcommand{\sint}[4]{\int_{#1}^{#2}{#3}\,\dd{#4}}



\newcommand{\bs}{\boldsymbol}

\numberwithin{equation}{section}

\definecolor{White}{rgb}{0.99,1.0,1.0}

\definecolor{Purple}{rgb}{0.59, 0.44, 0.84}
\newcommand{\tpurp}[1]{\textcolor{Purple}{#1}}

\title{Efficient Distribution Estimation and Uncertainty Quantification for Elliptic Problems on Domains with Stochastic Boundaries}

\author{Jehanzeb H Chaudhry\thanks{Department of Mathematics and Statistics, University of New Mexico, Albuquerque, NM, 87131 ({\tt jehanzeb@unm.edu}). J. Chaudhry’s work is supported in part by the Department of Energy (DESC0009324)
and by Sandia National Laboratories: Laboratory Directed Research
and Development (LDRD) Funding under Academic Alliance Program
FY2016.} 
\and 
Nathanial Burch\thanks{Gonzaga University, Spokane, Washington, 99258 ({\tt burchn@gonzaga.edu}). N.~Burch's research was partially supported by the National Science Foundation through
	the Statistical and Applied Mathematical Sciences Institute, grant DMS-0635449.}
        \and Donald Estep\thanks{Department of Statistics, Colorado State University, Fort Collins, Colorado, 80523 ({\tt estep@stat.colostate.edu}). D.~Estep's work is
supported in part by the Defense Threat Reduction Agency
(HDTRA1-09-1-0036), Department of Energy (DE-FG02-04ER25620,
DE-FG02-05ER25699, DE-FC02-07ER54909, DE-SC0001724, DE-SC0005304,
INL00120133, DE0000000SC9279), Idaho National Laboratory (00069249,
00115474), Lawrence Livermore National Laboratory (B573139, B584647,
B590495),  National Science Foundation (DMS-0107832, DMS-0715135,
DGE-0221595003, MSPA-CSE-0434354, ECCS-0700559, DMS-1065046,
DMS-1016268, DMS-FRG-1065046, DMS-1228206), National Institutes of
Health (\#R01GM096192).}}

\begin{document}

\maketitle

\begin{abstract}
	We study the problem of uncertainty quantification for the numerical solution of elliptic partial differential equation boundary value problems posed on domains with stochastically varying boundaries. We also use the uncertainty quantification results to tackle the efficient solution of such problems. 
	We introduce simple transformations that map a family of domains with stochastic boundaries to a fixed reference domain. We exploit the transformations to carry out a prior and a posteriori error analyses and to derive an efficient Monte Carlo sampling procedure.
\end{abstract}




\section{Introduction}

In this paper, we study uncertainty quantification and efficient solution of boundary value problems for elliptic partial differential equations (PDEs) posed on domains with stochastically perturbed boundaries. The problem of stochastic boundaries occurs for a variety of reasons, e.g. from physical stresses, manufacturing deficiencies, and uncertainty in measurements of a fixed geometry. Specific applications are found in transport in tubes with rough
boundaries \cite{tartakovsky2006stochastic}, aerodynamic studies in the design of
wind turbines \cite{canuto2009numerical}, heat diffusion across irregular and fractal-like surfaces
\cite{blyth2003heat,brady1993diffusive},
structural analysis studies \cite{nouy2008extended}, acoustic scattering on rough surfaces \cite{oba2010global,xiu2007efficient},
seismology and oil reservoir management \cite{babuska2004galerkin},
various civil and nuclear engineering studies \cite{babuska2005worst},
chemical transport in rough domains \cite{broyda2010probability},
and electromechanical studies for nanostructures \cite{arnst2009probabilistic}.

This paper focuses on two key issues that arise in such problems: 
\begin{itemize} 
\item Since the geometric properties of
the domain has a strong effect on solution behavior
and smoothness,  significant variation in
solution behavior for different realizations of the domain is to be expected. Correspondingly, significant variation in the error arising from
discretization and sampling is also to be expected; 
\item Each realization of a domain nominally requires construction of a new discretization mesh, at a significant computational cost. Hence, solving such problems using a Monte Carlo approach is computationally intensive.
\end{itemize}

\tpurp{We deal with these issues using two ideas. First, motivated  by the technique of isoparametric finite elements \cite{ciarlet1978finite}, we describe a family of  simple, locally determined, well-behaved transformations that map
a given elliptic problem posed on a family of stochastic domains $\{\Omega(\bs \theta)\}_{\theta\in \Theta}$ to an elliptic problem with stochastic
coefficients posed on a fixed reference domain $\Omega$. We then construct finite element approximations for the solution of sample elliptic problems on the reference domain and use these to formulate a Monte Carlo method  to compute the sample cumulative distribution function for a specified Quantity of Interest (QoI).  We carry out a full {a priori} analysis of the finite element method. 
Second, we carry out an uncertainty quantification by deriving a posteriori error estimates both for a QoI computed from a numerical solution that take into account all sources of deterministic and stochastic errors and for the approximate cumulative distribution function computed from the QoI. The estimate is sufficiently detailed that we can efficiently balance computational work, e.g. mesh resolution versus sample numbers, to achieve a desired accuracy. This provides a way to tackle computational efficiency by describing an efficient adaptive strategy that leads to a mesh that produces acceptable accuracy for
all realizations of the problem.}

\tpurp{The analysis of elliptic problems posed on stochastic domains has received much less attention than elliptic problems with stochastic coefficients. An approach that has received substantial attention is based on postulating global transformations between a reference domain and the random domains that satisfy certain regularity conditions and the use of  Karhunen-Loeve expansions, stochastic Galerkin methods, stochastic collocation, etc., to compute numerical approximations. Some of the earlier references are \cite{tartakovsky2006stochastic,xiu2007efficient,xiu2007numerical, arnst2009probabilistic,oba2010global}, which considers elliptic problems posed on a domain whose boundary is parameterized by a stochastic process.   Some of the key technical issues were analyzed in subsequent work, e.g., \cite{harbrecht2010output, CNT16,HPS16,HSSCS15}.}

\tpurp{The mathematical analysis of  the method considered in this paper and the methods studied in \cite{tartakovsky2006stochastic,xiu2007efficient,xiu2007numerical, arnst2009probabilistic,oba2010global,harbrecht2010output, CNT16,HPS16, HSSCS15} deal with similar technical challenges since all the methods employ transformations between random domains and a reference domain. However, the method analyzed in this paper has several fundamental differences to these other methods.  The great difficulty involved in constructing smooth global transformations between two  domains places constraints on the formulation of the random domain problem. The method analyzed in this paper avoids construction of global transformations between the random domain realizations and a fixed reference domain, instead employing simple, easily computed transformations confined to a neighborhood of the random boundary. This makes the method well-suited for the class of problems where data describing the boundary of each realization of the random domain is given, e.g., determined through physical measurement. Some of the analysis is  focused on dealing with the technical issues that arise from the use of a localized transformation.  With the goal of accurately computing a sample cumulative distribution function as opposed to a couple of statistics, we employ Monte Carlo sampling, and we consider the issue of increasing the efficiency of this approach. Finally, in addition to {\em a priori} convergence analysis, we present and implement {\em a posterior} error analysis as well as an adaptive method based on the estimates.}

Of course, transforming a domain of a given elliptic problem to another domain is a classic analytic approach,  e.g., Schwarz-Christoffel
transformations \cite{driscoll2002schwarz,trefethen1979numerical} provide a global and smooth map. However, conformal maps involve significant complications in practice, e.g., the resulting maps may introduce complications such as singularities at boundaries of a doman and they are difficult and expensive to compute. This motivates the very simple transformations studied in this manuscript.

The rest of the paper is organized as follows.
In Section~\ref{sec:prob_formulation}, we present the problem formulation and modeling assumptions.
We construct a piecewise-smooth transformation to a deterministic domain $\Omega$,
provide details for the finite element method, formulate the adjoint problem and an error estimate in Section~\ref{sec:problem_transformed}.
The estimate is extended to analyze Lions domain decomposition on a transformed domain in Section~\ref{sec:lions_domain_decomp}.
A posteriori estimates of the various sources of errors for each realization and for the empirical distribution
function are obtained in Section~\ref{sec_posteriori}.
Section~\ref{sec:AMR} discusses the construction of a finite element mesh suitable for all realizations of $\bs\theta$.
Numerical experiments are performed and presented throughout the paper.

\section{Problem formulation}
\label{sec:prob_formulation}
\subsection{Stochastically perturbed domains}
We describe the domains with stochastic boundaries as random perturbations of a nominal deterministic reference domain $\Omega$. The reference domain $\Omega$ is a convex polygonal domain in $\mathbb{R}^2$ with sides formed by straight edges joining a collection of nodes $\{\hat v_j\}_{j=1}^J$. To define the stochastic perturbations, we let $\{\hat \theta_j\}_{j=1}^J$ denote a collection of random vectors $\hat \theta_j \in \mathbb{R}^2$ such that $\hat \theta_j \sim (\Lambda_j, \mathcal{F}_j, P_j)$, where $(\Lambda_j, \mathcal{F}_j, P_j)$ is a probability space with compact domain $\Lambda_j \subset \mathbb{R}^2$, $\sigma$-algebra $\mathcal{F}_j$, and probability measure $P_j$. We abuse notation to let $\bs\theta = \{\hat \theta_j\}_{j=1}^J$ and let $\bs \theta \in \Theta$ denote the set of admissible perturbation vectors. The stochastic perturbation $\Omega(\bs\theta)$ is the polygonal domain with boundaries defined by straight edges connecting nodes $\{\hat v_j + \hat \theta_j\}_{j=1}^J$, ; see \figref{domains_from_data}.
\begin{figure}[htbp]
	\begin{center} \includegraphics[width=0.99\textwidth]{./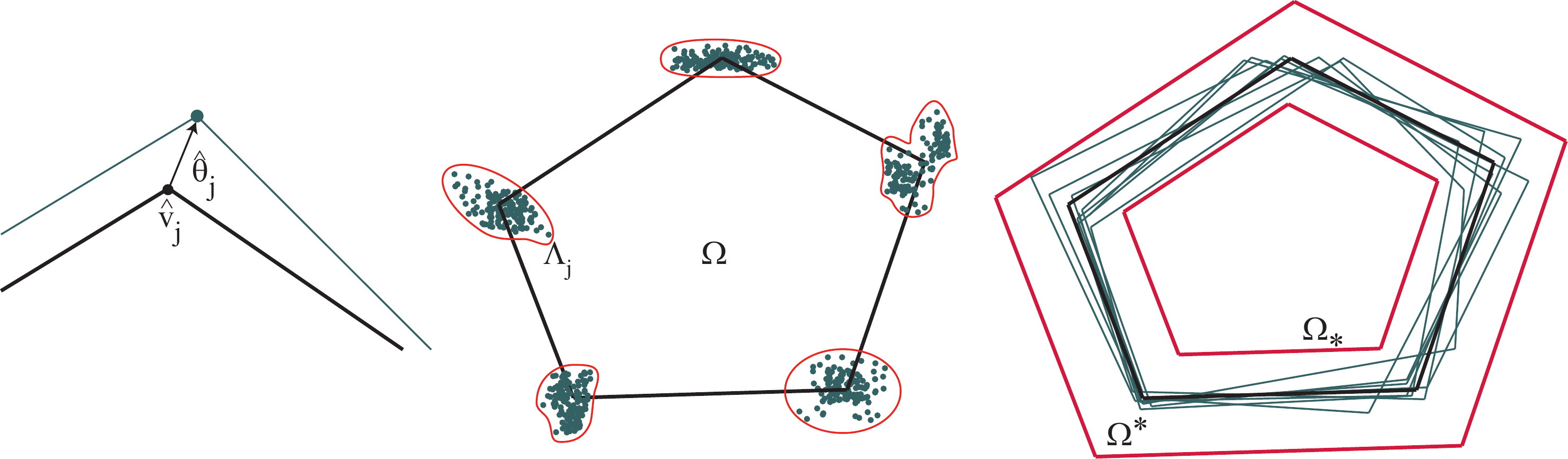}
	\end{center}
\caption{Left: Stochastic perturbation of a boundary node. Middle: A collection of perturbed boundary nodes. Right: A collection of stochastic perturbations of the nominal domain.} \label{domains_from_data}
\end{figure}
We note that $\Omega(\bs\theta) \to \Omega$ as $\| \bs \theta\| = \max \|\hat \theta_j \| \to 0$, where $\| \quad \|$ in the second quantity denotes the Euclidean norm.

To enforce the assumption that the domains $\Omega(\bs\theta)$ share the same gross shape, we let $\Omega^{\ast}$ and  $\Omega_{\ast}$ denote convex polygonal domains with $J$ boundary nodes and nonzero volume obtained by scaling $\Omega$ so that $\Omega_{\ast} \subseteq \Omega \subseteq \Omega^{\ast}$. We assume that for all $\bs\theta$,
\begin{equation*}
	\Omega_{\ast} \subseteq \Omega(\bs\theta) \subseteq \Omega^{\ast}, \quad \forall \bs\theta \in \Theta, \notag
\end{equation*}
see Fig.~\ref{domains_from_data}. As we demonstrate, the solution technique described below can be applied to relatively large perturbations and to nonconvex domains. However, there are well known analytic difficulties associated with elliptic problems on non-convex polygonal domains with ``sharp'' inset angles.  By reducing the volume of $\Omega^{\ast} \setminus \Omega_{\ast}$, we can limit the degree of non-convexity in $\Omega(\bs\theta)$ that may occur. 

We emphasize that the numerical method does \textbf{not} depend on identifying a ``true'' nominal domain $\Omega$. Under these assumptions, given a collection of sample domains $\{\Omega(\bs\theta^n)\}$ corresponding to a collection of samples $\{\bs \theta^n\}$,  we can simply choose one sample domain to use as a reference domain. As we discuss, some choices may yield improved numerical accuracy.

\subsection{The elliptic problem}
In defining the elliptic problem, we wish to avoid situations in which there is a radical change in material properties from one domain to the next. So, we assume that the coefficients and right-hand side of the elliptic equation are defined in the largest domain $\Omega^*$. 
The elliptic problem on $\Omega(\bs\theta)$ is: Find $w$ satisfying
\begin{equation}
	\left\{
	\begin{aligned}
		-\nabla \cdot \left( a(x) \nabla w(x;\bs\theta) \right) &= f(x), & x &\in \Omega(\bs\theta), \\
		w(x;\bs\theta) &= 0, & x &\in \partial\Omega(\bs\theta),
	\end{aligned} \right. \label{model_random_domain}
\end{equation}
where $a$ is a $2\times2$ symmetric positive definite matrix with coefficients that are continuous functions on $\Omega^{\ast}$ such that there is a constant $a_0>0$ with
$w^\top a w \geq a_0 \|w\|^2$ for all $w \in \mathbb{R}^2$ and $x \in \Omega^{\ast}$ and $f$ is continuous on $\Omega^{\ast}$. We assume that both $a$ and $f$ are independent of $\bs\theta$, though extensions to the cases when $a$ and $f$ are also stochastic are straightforward \cite{estep2008nonparametric,estep2009nonparametric}.
Having obtained the solution, we evaluate the quantity of interest (QoI),
\begin{equation}
\label{eq:qoi_orig_def}
Q(w;\bs\theta) = \sint{\Omega(\bs\theta)}{}{w(x;\bs\theta)\psi(x)}{x},
\end{equation}
where $\psi \in L^2(\Omega^{\ast})$.  By standard results $w(x;\bs\theta)$ depends continuously on $\bs\theta$. Hence, $Q(w;\bs\theta)$ is a random variable and the stochastic version of \eqref{model_random_domain} and \eqref{eq:qoi_orig_def} is to compute the probability distribution of $Q(w;\bs\theta)$,
\begin{align}
	P(t) = \text{Pr}(Q(u;\bs\theta) \leq t). \notag
\end{align}
Since we are dealing with a collection of domains, there are some restrictions on possible QoIs. For example, it is inappropriate to choose the value at a point that is not in all the sample domains. Some choices of QoI vary as the domain varies, e.g.,  $\psi = 1$.

\subsection{Basic properties of the elliptic problem}

The variational formulation of \eqref{model_random_domain} reads:
Find $w \in H_0^1(\Omega(\bs\theta))$ such that
\begin{equation} \label{model_random_domain_variational_1}
	B_{\Omega(\bs\theta)}(w,v) = F_{\Omega(\bs\theta)}(v), \quad \forall v \in H_0^1(\Omega(\bs\theta)),
\end{equation}
where the bilinear form
\begin{align}
	B_{\Omega(\bs\theta)}(w,v) = \sint{\Omega(\bs\theta)}{}{a(x) \nabla w \cdot \nabla v}{x} \notag
\end{align}
is bounded and coercive for all $\bs\theta \in \Theta$, i.e.,
\begin{subequations}
\label{eq:coer_cont_consts_transformed}
\begin{align}
		|B_{\Omega(\bs\theta)}(w,v)|
			&\leq C_1(\bs\theta) \|w\|_{H_0^1(\Omega(\bs\theta))} \|v\|_{H_0^1(\Omega(\bs\theta))},
					& \forall w,v \in H_0^1(\Omega(\bs\theta)), \notag \\
		B_{\Omega(\bs\theta)}(w,w)
			&\geq C_2(\bs\theta) \|w\|^2_{H_0^1(\Omega(\bs\theta))},
					& \forall w \in H_0^1(\Omega(\bs\theta)), \notag
\end{align}
\end{subequations}
and the linear form
\begin{align}
	F_{\Omega(\bs\theta)}(v) = \sint{\Omega(\bs\theta)}{}{f(x)v(x)}{x} \notag
\end{align}
is bounded, i.e.,
\begin{align}
	|F_{\Omega(\bs\theta)}(v)|
		\leq C_3(\bs\theta) \|v\|_{H_0^1(\Omega(\bs\theta))}. \notag
\end{align}
We assume uniform boundedness and coercivity with respect to $\bs\theta$, i.e., there are constants $C_1,C_2,C_3$ with
\begin{align}
	C_1(\bs\theta) \leq C_{1,max} < \infty
	\quad
	C_2(\bs\theta) \geq C_{2,min} >0, \quad  C_3(\bs\theta) \leq C_{3,max} < \infty . \label{uniform_bounds_1}
\end{align}

The Lax-Milgram lemma (\cite{larsson2008partial}) grants a unique weak solution $w$ to \eqref{model_random_domain_variational_1} for all $\bs\theta \in \Theta$. Moreover, by \eqref{uniform_bounds_1},
\begin{align}
	\sup_{\bs\theta \in \Theta} \| w \|_{H_0^1(\Omega(\bs\theta))} \leq \frac{1}{C_{2,max}} \| f \|_{L^2(\Omega^{\ast})}. \notag
\end{align}
Under the convexity assumption, the standard regularity result holds, i.e.,
\begin{align}
	\| w \|_{H_0^2(\Omega(\bs\theta))} \leq C(\bs\theta) \| f \|_{L^2(\Omega(\bs\theta))}. \notag
\end{align}

\section{Transformed Problem}
\label{sec:problem_transformed}

The problem is transformed by mapping $\Omega(\bs\theta)$ back to the reference domain $\Omega$ using a piecewise affine map. As mentioned, we can choose any sample domain to be the reference domain. To create the map, we partition $\Omega$ into $D$ triangular subdomains $\{\Omega_d\}_{d=1}^D$ such that $\overline{\Omega} = \bigcup_{d=1}^D \overline{ \Omega_d}$. The subdomains in the partition are chosen so that they are non-intersecting and the vertices of subdomains do not intersect interiors of edges of other subdomains or interiors of boundary edges. Under these assumptions, there is a corresponding partition $\{\Omega_d(\bs\theta)\}_{d=1}^D$ of each $\Omega(\bs\theta)$ that is obtained by perturbing only the nodes of the partition $\{\Omega_d\}_{d=1}^D$ that lie on the boundary of $\Omega$ by $\bs \theta$.

\tpurp{There are many choices of such partitions but some choices yield better numerical results than others. In particular, we use the partition to make a domain decomposition formulation of the original elliptic problem and the difficulty in obtaining accurate numerical solutions is affected by the shape of the subdomains. Consequently, the angles in the subdomains and number of subdomains impact the convergence of the domain decomposition iteration and the condition numbers of the resulting linear systems. Likewise, the properties of the transformations affect the accuracy of the finite element approximation. We illustrate some of the issues with an example in \S \ref{sec:num_ex_trans_dom_effect}.}

\subsection{Random samples}
We select $N$ independent realizations $\{ \bs\theta^n \}_{n=1}^N$, which corresponds to a set of sample domains $\{\Omega(\bs\theta^n))\}$.

\subsection{Transformation to a reference domain}

Rather than discretizing the elliptic problem on each sample domain $\Omega(\bs\theta^n)$ directly, we first  apply a piecewise affine map $\varphi : \Omega(\bs\theta^n) \to \Omega$. The map is determined by $D$ invertible affine maps $\varphi_d(\bs\theta^n)=\varphi_d^n : \Omega_d(\bs\theta^n) \to \Omega_d$, $1 \leq d \leq D$.  Let $y \in \Omega_d$ denote the image of $x \in \Omega_d(\bs\theta^n)$ under the map $\varphi_d^n$ and let $\mathbf{J}_d^n$ denote the Jacobian matrix of $\varphi_d^n$. We denote the three vertices of $\Omega_d(\bs\theta^n)$ by $\mathbf{r}_{d,1}^n$, $\mathbf{r}_{d,2}^n$, and $\mathbf{r}_{d,3}^n$ and the three corresponding vertices of $\Omega_d$ by $\mathbf{s}_{d,1}$, $\mathbf{s}_{d,2}$, and $\mathbf{s}_{d,3}$,  see Fig.~\ref{diff_problems_meshed}. The transformation is defined,
\begin{align}
	\varphi_d^n(x) = \mathbf{J}_d^n(x-\mathbf{r}_{d,1}^n) + \mathbf{s}_{d,1}, \label{define_phi}
\end{align}
where
\begin{align}
	\mathbf{J}_d^n =
	\begin{pmatrix}
		\mathbf{s}_{d,2}-\mathbf{s}_{d,1} & -(\mathbf{s}_{d,3}-\mathbf{s}_{d,1})
	\end{pmatrix}
	\begin{pmatrix}
		\mathbf{r}_{d,2}^n-\mathbf{r}_{d,1}^n & -(\mathbf{r}_{d,3}^n-\mathbf{r}_{d,1}^n)
	\end{pmatrix}^{-1} = \mathbf{S}_d(\mathbf{R}_d^n)^{-1}. \notag
\end{align}
Both  $\mathbf{S}_d$ and $\mathbf{R}_d^n$ are invertible since $\Omega_d(\bs\theta^n)$ and $\Omega_d$ are
non-degenerate triangles.
\begin{figure}[htbp]
	\begin{center}
\includegraphics[width=0.99\textwidth]{./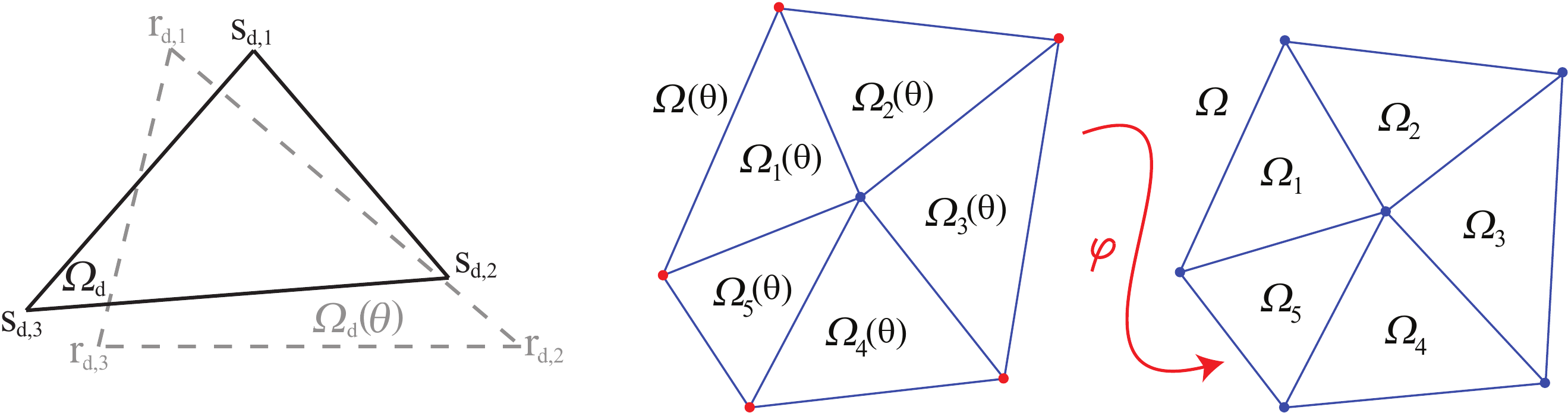}
		\caption{Left: $\Omega_d$ and $\Omega_d(\bs\theta)$. Right: The transformation $\varphi: \Omega(\bs\theta) \to \Omega$. We have dropped the superscript indicating sample.} \label{diff_problems_meshed}
	\end{center}
\end{figure}

We constrain the partitions $\{\Omega_d(\bs\theta^n)\}_{d=1}^D$ so that for constants $M_{\ast}, M^{\ast}$,
\begin{align}
\label{eq:subdomain_pert_cond}
	0 < M_{\ast}
		\leq \min_{n,d} |\det(\mathbf{J}_d^n)|
		\leq \max_{n,d} |\det(\mathbf{J}_d^n)|
		\leq M^{\ast} < \infty,
\end{align}
uniformly for all samples $\{\bs\theta^n\}$.
We note that \eqref{eq:subdomain_pert_cond} implies uniform bounds on $\| \mathbf{J}_d^n \|$ and $\| (\mathbf{J}_d^n)^{-1} \|$.

\subsection{Transformation of the problem}
\label{sec:transformed_problem_poisson}
We next reformulate the elliptic problem by applying the transformation $\varphi$.

Let $\mathbf{n}_d$ denote the outward pointing normal vector to the boundary of $\Omega_d(\bs\theta^n)$, $a|_{\Omega_d} \equiv a_d$ and $f|_{\Omega_d} \equiv f_d$. Then, for $1\leq d \leq D$, compute  $w_d$ on $\Omega_d(\bs\theta^n)$ solving,
\begin{equation} \label{random_problem_partitioned_1}
	\left\{
	\begin{aligned}
		-\nabla \cdot \pars{ a_d(x) \nabla w_d^n(x;\bs\theta^n) } &= f_d(x), & x &\in \Omega_d(\bs\theta^n), \\
		w_d^n(x;\bs\theta^n) &= 0, & x &\in \partial\Omega(\bs\theta^n) \cap \partial\Omega_d(\bs\theta^n), \\
		w_d^n(x;\bs\theta^n) &= w_{\wt{d}}^n(x;\bs\theta^n), & x &\in \partial\Omega_d(\bs\theta^n) \cap \partial\Omega_{\wt{d}}(\bs\theta^n), & \forall \wt{d} &\in d', \\
		\mathbf{n}_d \cdot (a_d(x) \nabla w_d^n(x;\bs\theta^n))
			&= -\mathbf{n}_{\wt{d}} \cdot (a_d(x) \nabla w_{\wt{d}}^n(x;\bs\theta^n)), & x &\in \partial\Omega_d(\bs\theta^n) \cap \partial\Omega_{\wt{d}}(\bs\theta^n), & \forall \wt{d} &\in d',
	\end{aligned}
	\right.
\end{equation}
where $d'$ is the set of $\{ 1,\hdots,D \} \backslash \{d\}$ such that $\Omega_d(\bs\theta^n)$ and $\Omega_{\wt{d}}(\bs\theta^n)$ share a boundary. The last two lines in \eqref{random_problem_partitioned_1} are interface conditions guaranteeing continuity of the solution and normal flux across the boundaries.
The analysis of the existence, uniqueness, and regularity of the solution of \eqref{random_problem_partitioned_1} is discussed in
\cite{babuska1970finite,bjorstad1986iterative,bramble1986iterative,lions1988schwarz,lions1990schwarz}. We have $w |_{\Omega_d} \equiv w_d$.

Using \eqref{define_phi} in \eqref{random_problem_partitioned_1}, we obtain the transformed problem on $\Omega$,
\begin{equation} \label{mapped_problem_2d_0}
	\left\{
	\begin{aligned}
		-\nabla \cdot \pars{ \mathbf{A}_d^n(y;\bs\theta^n) \nabla u_d^n(y;\bs\theta^n) } &= F_d^n(y;\bs\theta^n), & y &\in \Omega_d, \\
		u_d^n(y;\bs\theta^n) &= 0, & y &\in \partial\Omega \cap \partial\Omega_d, \\
		u_d^n(y;\bs\theta^n) &= u_{\wt{d}}^n(y;\bs\theta^n), & y &\in \partial\Omega_d \cap \partial\Omega_{\wt{d}}, & \forall \wt{d} &\in d', \\
		\mathbf{n}_d \cdot (\mathbf{A}_d(y;\bs\theta^n) \nabla u_d^n(y;\bs\theta^n))
				&= -\mathbf{n}_{\wt{d}} \cdot (\mathbf{A}_{\wt{d}}(y;\bs\theta^n) \nabla u_{\wt{d}}^n(y;\bs\theta^n)),
							& y &\in \partial\Omega_d \cap \partial\Omega_{\wt{d}}, & \forall \wt{d} &\in d',
	\end{aligned}
	\right.
\end{equation}
where $u_d(y;\bs\theta^n) = w_d((\varphi_d^n)^{-1}(y);\bs\theta^n)$,
\begin{align}
	\mathbf{A}_d^n(y;\bs\theta^n) = \lvert \mathrm{det} \mathbf{J}_d^n \lvert ^{-1}\; \mathbf{J}_d^n \;  a_d((\varphi_d^n)^{-1}(y)) \; (\mathbf{J}_d^n)^\top,
	 \text{  and,  } 
	F_d^n(y;\bs\theta^n) = \lvert \mathrm{det} \mathbf{J}_d^n \lvert ^{-1} f_d((\varphi_d^n)^{-1}(y)). \notag
\end{align}
Equivalently, we can write \eqref{mapped_problem_2d_0} in compact form as,
\begin{equation}
\label{mapped_problem_2d_0_compact}
\left \{
\begin{aligned}
-\nabla \cdot \pars{ \mathbf{A}^n(y;\bs\theta^n) \nabla u^n(y;\bs\theta^n) } &= F^n(y;\bs\theta^n), & y &\in \Omega,\\
u^n(y;\bs\theta^n) &= 0, & y &\in \partial\Omega \cap \partial\Omega,
\end{aligned}
\right.
\end{equation}
where $u^n(y;\bs\theta^n) = u_d^n(y;\bs\theta^n)$, $\mathbf{A}^n(y;\bs\theta^n) = \mathbf{A}_d^n(y;\bs\theta^n)$ and $F^n(y;\bs\theta^n) = F_d^n(y;\bs\theta^n)$ for $y \in \Omega_d$.
We note that \eqref{define_phi} implies that $\mathbf{A}_d^n$ and $\mathbf{F}_d^n$ can be computed without constructing $\Omega(\bs\theta^n)$ explicitly.

The QoI in terms of the transformed variables becomes,
\begin{equation}
\label{eq:qoi_trans}
Q(u^n; \bs\theta) = \int_\Omega u^n(y;\bs \theta) \tilde{\psi}(y) \, \mathrm{d}y,
\end{equation}
where $\tilde{\psi}(y)  = \psi( (\varphi_d^n)^{-1}(y)) \,   \lvert \mathrm{det} \mathbf{J}_d^n \lvert ^{-1}$ for $y \in \Omega_d$.

\subsubsection{Weak form of the transformed problem}

For the variational formulation of \eqref{mapped_problem_2d_0_compact}, we let $V= H_0^1(\Omega)$ and  seek $u \in V$, we have,
\begin{equation}
\label{eq:weak_form_trans}
	\begin{aligned}
		\sint{\Omega}{}{\mathbf{A}^n(y;\bs\theta^n)\nabla u^n \cdot \nabla v}{y}
			= \sint{\Omega}{}{F^n(y;\bs\theta^n) v}{y},
	\end{aligned}
\end{equation}
for all $v \in V$.
The matrix $\mathbf{A}^n$ is symmetric, positive definite.

We now show that the bilinear form $ \sint{\Omega_d}{}{\mathbf{A}^n(y;\bs\theta^n) \nabla u_d^n \cdot \nabla v_d}{y} $ is continuous and coercive. Let $\kappa_d := \mathrm{diam} (\Omega_d)$, $\rho_d:= \sup\{ \mathrm{diam}(S) \lvert S \subset \Omega_d$,  $\kappa_d(\bs\theta^n) := \mathrm{diam} (\Omega_d(\bs\theta))$, $\rho_d(\bs\theta^n):= \sup\{ \mathrm{diam}(S) \lvert S \subset \Omega_d(\bs\theta^n)$. We utilize the following properties of the Jacobians $\mathbf{J}_d^n$~\cite{ciarlet1978finite},
\begin{equation}
\label{eq:norm_j_and_j_inv}
\| \mathbf{J}_d^n \| \leq \frac{\kappa_d}{\rho_d(\bs \theta^n)}, \qquad \| (\mathbf{J}_d^n)^{-1} \| \leq \frac{\kappa_d(\bs \theta^n)}{\rho_d}.
\end{equation}
Further,
\begin{equation}
\label{eq:abs_det}
| \det \mathbf{J}_d^n | = \frac{\mathrm{meas}(\Omega)}{\mathrm{meas}(\Omega(\bs \theta^n))},
\end{equation}
and
\begin{equation}
\label{eq:meas_dom}
\pi \rho_d(\bs \theta^n)^2 \leq \mathrm{meas}(\Omega_d(\bs \theta^n)) \leq \pi \kappa_d(\bs \theta^n)^2,\qquad \pi \rho_d^2 \leq \mathrm{meas}(\Omega_d) \leq \pi \kappa_d^2,
\end{equation}
where $\mathrm{meas}$ denotes the Lebesgue measure on $\mathbb{R}^2$.

We have from \eqref{eq:abs_det} and \eqref{eq:meas_dom},
\begin{equation}
\label{eq:det_bounds}
\frac{\rho_d^2}{\kappa(\bs \theta^n)^2}| \leq \lvert \det J_d \lvert \leq \frac{\kappa_d^2}{\rho_d(\bs \theta^n)^2}.
\end{equation}

\begin{lemma}
\label{lem:trans_cont_coer}
The bilinear form
\begin{align}
	B_{\Omega}(u^n,v) = \sint{\Omega}{}{\mathbf{A}^n(y;\bs\theta^n) \nabla u^n \cdot \nabla v}{y} \notag
\end{align}
is bounded and coercive, i.e.,
\begin{subequations}
\begin{align}
		B_{\Omega}(u^n,v)
			&\leq C_{1,n} \|u^n\|_{H_0^1(\Omega)} \|v\|_{H_0^1(\Omega)},
					& \forall u^n,v \in H_0^1(\Omega), \label{eq:cont_trans_form}\\
		B_{\Omega}(u^n,u^n)
			&\geq C_{2,n} \|u^n\|^2_{H_0^1(\Omega)},
					& \forall u \in H_0^1(\Omega), \label{eq:coer_trans_form}
\end{align}
\end{subequations}
where the constants $C_{1,n}$ and $C_{2,n}$ depend on the transformations $\varphi_d(\bs \theta^n)$.
\end{lemma}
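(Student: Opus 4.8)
The plan is to reduce everything to a subdomain-by-subdomain estimate and then sum, exploiting the symmetry built into the definition of $\mathbf{A}_d^n$. The key algebraic observation is that, since $\mathbf{A}_d^n = |\det\mathbf{J}_d^n|^{-1}\,\mathbf{J}_d^n\, a_d\, (\mathbf{J}_d^n)^\top$, for any $y \in \Omega_d$ we have
\[
\mathbf{A}_d^n \nabla u \cdot \nabla v = |\det\mathbf{J}_d^n|^{-1}\, a_d\, \bigl((\mathbf{J}_d^n)^\top \nabla u\bigr)\cdot\bigl((\mathbf{J}_d^n)^\top \nabla v\bigr),
\]
so on each piece the transformed form is simply the original $a$-weighted form evaluated on the transported gradients $p := (\mathbf{J}_d^n)^\top \nabla u$ and $q := (\mathbf{J}_d^n)^\top \nabla v$, rescaled by the Jacobian determinant. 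I note also that, although $a$ is only assumed uniformly coercive with constant $a_0$, its continuity on the compact closure $\overline{\Omega^*}$ furnishes a finite upper bound $a_1 := \sup_{x\in\overline{\Omega^*}}\|a(x)\| < \infty$, which I will use for the continuity estimate.

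For the continuity bound \eqref{eq:cont_trans_form} I would estimate the $a_d$-weighted product of $p$ and $q$ by Cauchy--Schwarz in the $a_d$ inner product (equivalently $|a_d p\cdot q| \le a_1 \|p\|\,\|q\|$), then use the operator-norm bounds $\|p\| \le \|\mathbf{J}_d^n\|\,\|\nabla u\|$ and $\|q\| \le \|\mathbf{J}_d^n\|\,\|\nabla v\|$. Integrating over $\Omega_d$, applying Cauchy--Schwarz for the $L^2(\Omega_d)$ integral, and summing over $d$ yields \eqref{eq:cont_trans_form} with a constant controlled by $a_1 \max_d\bigl(|\det\mathbf{J}_d^n|^{-1}\|\mathbf{J}_d^n\|^2\bigr)$; the bounds \eqref{eq:norm_j_and_j_inv} and \eqref{eq:det_bounds} show this is finite for each sample $n$.

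For coercivity \eqref{eq:coer_trans_form} I would set $v=u^n$, use the lower bound $a_d p\cdot p \ge a_0\|p\|^2$, and then bound $\|p\|$ from below: since $\nabla u = (\mathbf{J}_d^n)^{-\top} p$, we have $\|\nabla u\| \le \|(\mathbf{J}_d^n)^{-1}\|\,\|p\|$, hence $\|p\| \ge \|(\mathbf{J}_d^n)^{-1}\|^{-1}\|\nabla u\|$. This gives, on each $\Omega_d$, $\mathbf{A}_d^n\nabla u\cdot\nabla u \ge a_0 |\det\mathbf{J}_d^n|^{-1}\|(\mathbf{J}_d^n)^{-1}\|^{-2}\|\nabla u\|^2$; summing over $d$ produces \eqref{eq:coer_trans_form} with a strictly positive constant controlled by $a_0\min_d\bigl(|\det\mathbf{J}_d^n|^{-1}\|(\mathbf{J}_d^n)^{-1}\|^{-2}\bigr)$, again positive and finite by \eqref{eq:norm_j_and_j_inv} and \eqref{eq:det_bounds}. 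In both estimates I would finish by passing from the gradient $L^2$-norm to the full $H_0^1(\Omega)$ norm, which are equivalent on $H_0^1(\Omega)$ by the Poincar\'e inequality.

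The main obstacle is bookkeeping rather than a genuine difficulty: because $\mathbf{A}^n$ is only piecewise defined and is generally discontinuous across the subdomain interfaces, the coefficient cannot be treated as a single smooth tensor, so all spectral estimates must be carried out locally on each $\Omega_d$ and then reassembled, taking a maximum over $d$ for the upper bound and a minimum over $d$ for the lower bound. One must also take care that transposition and inversion do not change the operator norms (i.e. $\|(\mathbf{J}_d^n)^\top\| = \|\mathbf{J}_d^n\|$ and $\|(\mathbf{J}_d^n)^{-\top}\| = \|(\mathbf{J}_d^n)^{-1}\|$), and supply the upper spectral bound $a_1$ on $a$, which is not stated explicitly but follows from continuity on the compact closure of $\Omega^*$.
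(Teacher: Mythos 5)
Your proposal is correct and takes essentially the same route as the paper's proof: a subdomain-by-subdomain spectral estimate of $\mathbf{A}_d^n$ using the operator norms of $\mathbf{J}_d^n$ and $(\mathbf{J}_d^n)^{-1}$ together with the bounds \eqref{eq:norm_j_and_j_inv} and \eqref{eq:det_bounds}, a maximum over $d$ for continuity and a minimum over $d$ for coercivity, and the Poincar\'e inequality to recover the full $H_0^1(\Omega)$ norm. Your transported gradients $(\mathbf{J}_d^n)^\top\nabla u$ are just a restatement of the paper's eigenvalue bounds for $\mathbf{J}_d^n(\mathbf{J}_d^n)^\top$, and your constants $a_1$ and $a_0$ play the roles of the paper's $a_{max}$ and $a_{min}$.
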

\begin{proof}
Let $\lambda_{n,d,max}$ and $\lambda_{n,d,min}$ denotes the largest and smallest eigenvalues of $\mathbf{J}_d^n$ and $a_{max}$ and $a_{min}$ be the ones for the symmetric positive definite matrix $a$. Then,
\begin{equation}
\label{eq:bilin_trans_cont_a}
B_{\Omega}(u^n,v) \leq  \max_d  \left( \lvert \det J_d^{-1} \lvert  \, \lambda_{n,d,max}^2  \right) \,a_{max} \, \| u^n \|_{H_0^1(\Omega)} \| v\|_{H_0^1(\Omega)}.
\end{equation}
By \eqref{eq:norm_j_and_j_inv} 
\begin{equation}
\label{eq:eig_det_J_ub}
\lambda_{n,d,max}^2 = \|\mathbf{J}_d^n (\mathbf{J}_d^n)^\top \| \leq \| \mathbf{J}_d^n \|^2 \leq \frac{\kappa_d^2}{\rho_d(\bs \theta^n)^2}.
\end{equation}
Combining \eqref{eq:det_bounds}, \eqref{eq:bilin_trans_cont_a} and \eqref{eq:eig_det_J_ub} we have,
\begin{equation*}
B_{\Omega}(u^n,v)
			\leq a_{max}  \max_d \left( \frac{\kappa_d^2}{\rho_d^2} \frac{\kappa_d(\bs \theta)^2}{\rho_d(\bs \theta^n)^2} \right)\|u^n\|_{H_0^1(\Omega)} \|v\|_{H_0^1(\Omega)},
\end{equation*}
which proves \eqref{eq:cont_trans_form} with 
\begin{equation}
C_{1,n} = a_{max}  \max_d \left( \frac{\kappa_d \, \kappa_d(\bs \theta^n)}{\rho_d \, \rho_d(\bs \theta^n)} \right)^2.
\end{equation}

Now we prove coercivity of the bilinear form. We have,
\begin{equation}
\label{eq:bilin_trans_coer_a}
B_{\Omega}(u^n,u^n) \geq \gamma \min_d \left( \lvert \det J_d^{-1} \lvert  \, \lambda_{n,d,min}^2  \right) \,a_{min} \, \| u^n \|^2_{H_0^1(\Omega)} 
\end{equation}
\tpurp{where $\gamma$ is the constant arising from Poincar\'e's inequality.} 
By \eqref{eq:norm_j_and_j_inv},
\begin{equation*}
\lambda_{n,d,min}^2 = \frac{1}{\| (\mathbf{J}_d^n (\mathbf{J}_d^n)^\top)^{-1}\|} 
 \geq \frac{1}{( \|\mathbf{J}_d^n)^{-1} \|^2} \geq \frac{\rho_d^2}{\kappa_d(\bs \theta^n)^2}.
\end{equation*}
Combining this with \eqref{eq:det_bounds} and \eqref{eq:bilin_trans_coer_a},
\begin{equation*}
B_{\Omega}(u^n,u^n)
			\geq a_{min}  \gamma \min_d \left( \frac{\rho_d^2}{\kappa_d^2} \frac{\rho_d(\bs \theta)^2}{\kappa_d(\bs \theta^n)^2} \right)\|u^n\|^2_{H_0^1(\Omega)},
\end{equation*}
which proves \eqref{eq:coer_trans_form} with
\begin{equation}
C_{2,n} = \gamma a_{min}  \min_d \left( \frac{\rho_d \, \rho_d(\bs \theta^n)}{\kappa_d \, \kappa_d(\bs \theta^n)} \right)^2.
\end{equation}
\end{proof}

A similar argument shows that the linear form $\ \int_{\Omega_d} F^n(y;\bs\theta) v_d(y)\, dy $ 	 is  bounded.

\subsection{Finite element discretization} \label{subsec_fem}

We next discretize each transformed problem \eqref{mapped_problem_2d_0_compact} using a standard  finite element method corresponding to a triangulation of $\Omega$. We let $\mathcal{T}_h$ denote a triangulation of $\Omega$ that is a refinement of the partition $\{\Omega_d\}$, that is $\mathcal{T}_h$ is a collection of non-overlapping triangular elements $\{K_m\}_{m=1}^M$ that is constructed by starting with $\{\Omega_d\}$ and refining into smaller triangles such that no node of one $K_m$ intersects and interior edge of another $K_{m^\prime}$ and $\Omega = \cup_m K_m$. We let $h_K$ denote the length of the maximum side of element $K$, define the mesh function $h(x) = h_K$ for $x\in K$, and set $h = \max h_K$ for all $K$ in $\mathcal{T}_h$. We let $\alpha_K$ denote the maximum of the interior angles in element $K$, and assume there is a constant $\alpha < \pi$ such that $\alpha_K < \alpha$ for all $K \in \mathcal{T}_h$. The maximum angle condition insures that the finite element solution corresponding to the mesh converges at the expected rate.
We let $V_{h}(\Omega)$ denote the space of continuous piecewise linear functions on $\mathcal{T}_h$. We note that the restriction of $V_{h}(\Omega)$  is a subset of $V$. The finite element discretization reads: Compute $U^n \in V_{h}(\Omega)$ such $\forall v \in V_{h}(\Omega)$,
\begin{equation} \label{weak_form_of_problem}
	\begin{aligned}
		 \sint{\Omega}{}{\mathbf{A}_d^n(y;\bs\theta^n)\nabla U^n \cdot \nabla v}{y}
			= \sint{\Omega}{}{F_d^n(y;\bs\theta^n) v}{y}.
	\end{aligned}
\end{equation}

\subsection{Convergence properties}

The restriction on the partitions $\Omega_d$ in \eqref{eq:subdomain_pert_cond}, along with the maximum angle condition implies that the finite element approximation converges at a first order rate in the energy norm and a second order rate in the $L^2(\Omega)$ norm uniformly with respect to $\bs\theta^n$. However, the choice of partitioning has a significant effect on the magnitude of the error, even if the mesh satisfies the maximum angle condition which can be observed by consideration of the constants in the convergence for the  $H^1(\Omega)$ norm. A standard result from finite element analysis is that the $H^1(\Omega)$ norm is bounded by,
\begin{equation}
\|u^n - U^n \|_{H_0^1(\Omega)} \leq \sqrt{\frac{C_{1,n}}{C_{2,n}}} \min_{v \in V_h(\Omega)} \|u^n - v \|_{H_0^1(\Omega)},
\end{equation}
where $C_{1,n}$ and $C_{2,n}$ are the continuity and coercivity constants for the transformed problem, see Lemma~\ref{lem:trans_cont_coer}. The factor $\min_{v \in V_h(\Omega)} \|u^n - v \|_{H_0^1(\Omega)}$ depends on the order of the polynomials used in constructing $ V_h(\Omega)$ as well as the properties of the triangulation $\mathcal{T}_h$. This term is well understood and here we focus on the effect on the error due to the transformation.  Substituting the values of the constants $C_{1,n}$ and $C_{2,n}$ from the proof of Lemma~\ref{lem:trans_cont_coer} we have,
\begin{multline}
\label{eq:up_bnd_h1_err_trans}
\|u^n - U^n \|_{H_0^1(\Omega)} \leq \sqrt{  \frac{a_{max}}{\gamma a_{min}}} \max_d\left( \frac{\kappa_d \, \kappa_d(\bs \theta^n)}{\rho_d \, \rho_d(\bs \theta^n)} \right)
 \cdot \left [ \min_d \left( \frac{\rho_d \, \rho_d(\bs \theta^n)}{\kappa_d \, \kappa_d(\bs \theta^n)} \right) \right ]^{-1} \min_{v \in V_h(\Omega)} \|u^n - v \|_{H_0^1(\Omega)} \\
  = \sqrt{ \frac{a_{max}}{\gamma  a_{min}}}  \max_d\left( \frac{\kappa_d \, \kappa_d(\bs \theta^n)}{\rho_d \, \rho_d(\bs \theta^n)} \right)^2\min_{v \in V_h(\Omega)} \|u^n - v \|_{H_0^1(\Omega)}  
 \end{multline}

\tpurp{If we assume that we have the same domain $\Omega$ with partitions $\Omega_d$ for the transformed domain, then the bound \eqref{eq:up_bnd_h1_err_trans} indicates that the error is adversely affected by the shape of the random domain $\Omega(\bs \theta)$. In particular, sliver-like partitions, that is partitions which have  a large $\kappa_d(\bs \theta)/\rho_d(\bs \theta)$ ratio, reflect a boundary geometry that can have large impact on the accuracy of the finite element approximation.}

\subsection{Construction of the transformation and a numerical example}\label{sec:num_ex_trans_dom_effect}

\tpurp{As indicated, there is a great deal of flexibility in the construction of the transformations to a reference domain. We choose transformations that are the identity except for a neighborhood of the boundary.  We construct the subdomains  $\Omega_d$ using a uniform partition of $\Omega$. The affine maps $\varphi_d^n$ are defined to be the identity for the subdomains  which do not intersect with the boundary of the domain, that is, the subdomains $\Omega_d(\bs \theta^n)$ and $\Omega_d$ coincide in this case. This effectively localizes the domain transformation to a neighborhood of the boundary. The maps $\varphi_d^n$ for the subdomains which intersect the boundary are specified by the the formula \eqref{define_phi}. Thus as the points on the boundary change, 
the subdomains $\Omega_d(\bs \theta^n)$ which intersect with the boundary and  the maps $\varphi_d^n$ change while the corresponding subdomains $\Omega_d$  remain fixed. We illustrate in Figure~\ref{fig:rand_doms_num_exp_effect_trans}.}

\begin{figure}[htbp]
	\begin{center}
		\subfigure[$\Omega(\bs \theta^1)$]{\includegraphics[width=0.3\textwidth]{./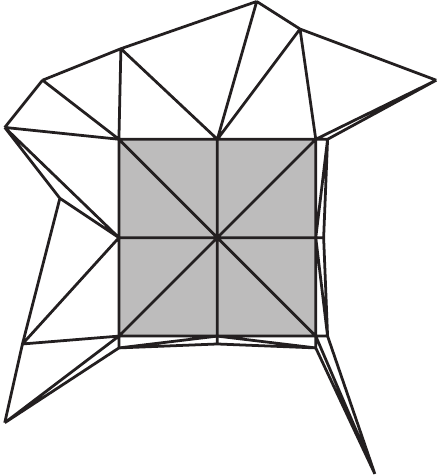}}
		\hspace{0.2in}
		\subfigure[$\Omega(\bs \theta^2)$]{\includegraphics[width=0.3\textwidth]{./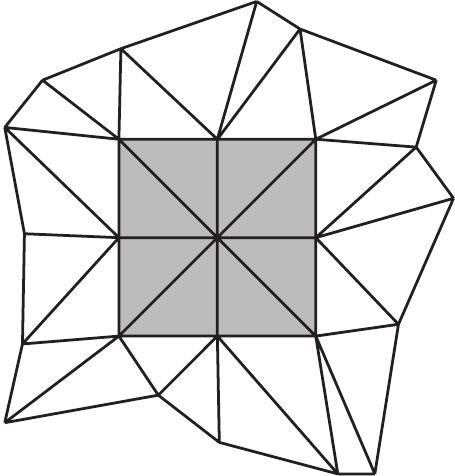}}
		\hspace{0.2in}
		\subfigure[$\Omega$]{\includegraphics[width=0.3\textwidth]{./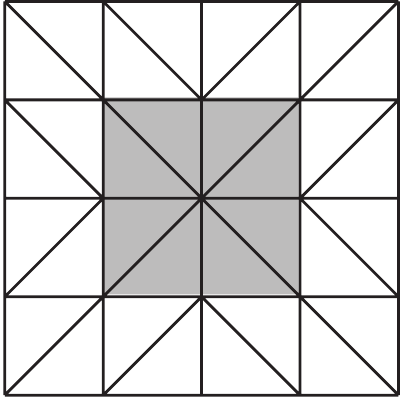}}
		\caption{The random domains $\Omega(\bs \theta^1)$  and $\Omega(\bs \theta^2)$  and the reference domain $\Omega$ to which they are mapped. The transformations are the identity on the cells in the grey shaded region.}
		\label{fig:rand_doms_num_exp_effect_trans}
	\end{center}
\end{figure}

\tpurp{The random domain $\Omega(\bs \theta^1)$ is sliver like and $\displaystyle \max_d \left( \frac{\kappa_d(\bs \theta^1)}{\rho_d(\bs \theta^1)} \right)=32.3$.  The random domain $\Omega(\bs \theta^2)$ is more regular and $\displaystyle \max_d \left( \frac{\kappa_d(\bs \theta^2)}{\rho_d(\bs \theta^2)} \right)=4.8$.}

\tpurp{Reflecting boundary geometry, a transformation may have a strong effect on the difficulty in obtaining an accurate approximation. We illustrate by solving an elliptic problem with $a=1$, $f(x,y) = 200x(1-x)+200y(1-y)$ on the two random domains $\Omega(\bs \theta^1)$ and $\Omega(\bs \theta^2)$ shown in Figure~\ref{fig:rand_doms_num_exp_effect_trans}. For the QoI, we choose $\psi = \chi 10 x y$ in \eqref{eq:qoi_orig_def} where $\chi$ is the characteristic function of $[0.50, 0.75] \times [0.50,0.75]$.}  

\tpurp{We compute numerical approximations for both $\bs \theta^1$ and $\bs \theta^2$ using the transformed problem formulation on $\Omega$ in \S \ref{sec:problem_transformed}. The mesh for $\Omega$ has 681 vertices and was generated  using Gmsh~\cite{GR09}. The numerical solutions are computed using the standard space of piecewise linear continuous functions.}

\begin{figure}[htbp]
	\begin{center}
		\subfigure[$U^1$]{\includegraphics[width=0.4\textwidth]{./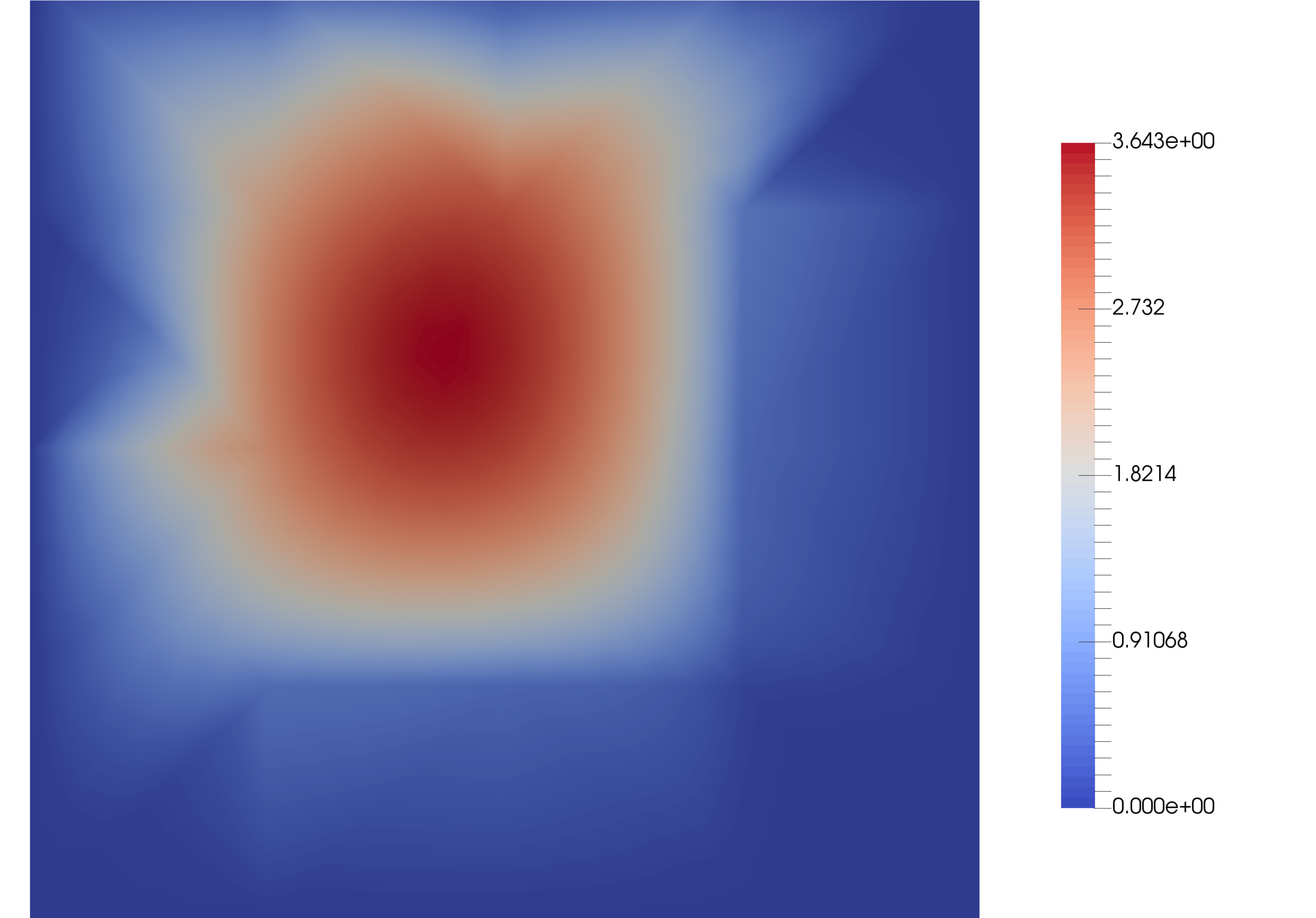}}
		\hspace{0.6in}
		\subfigure[$U^2$]{\includegraphics[width=0.4\textwidth]{./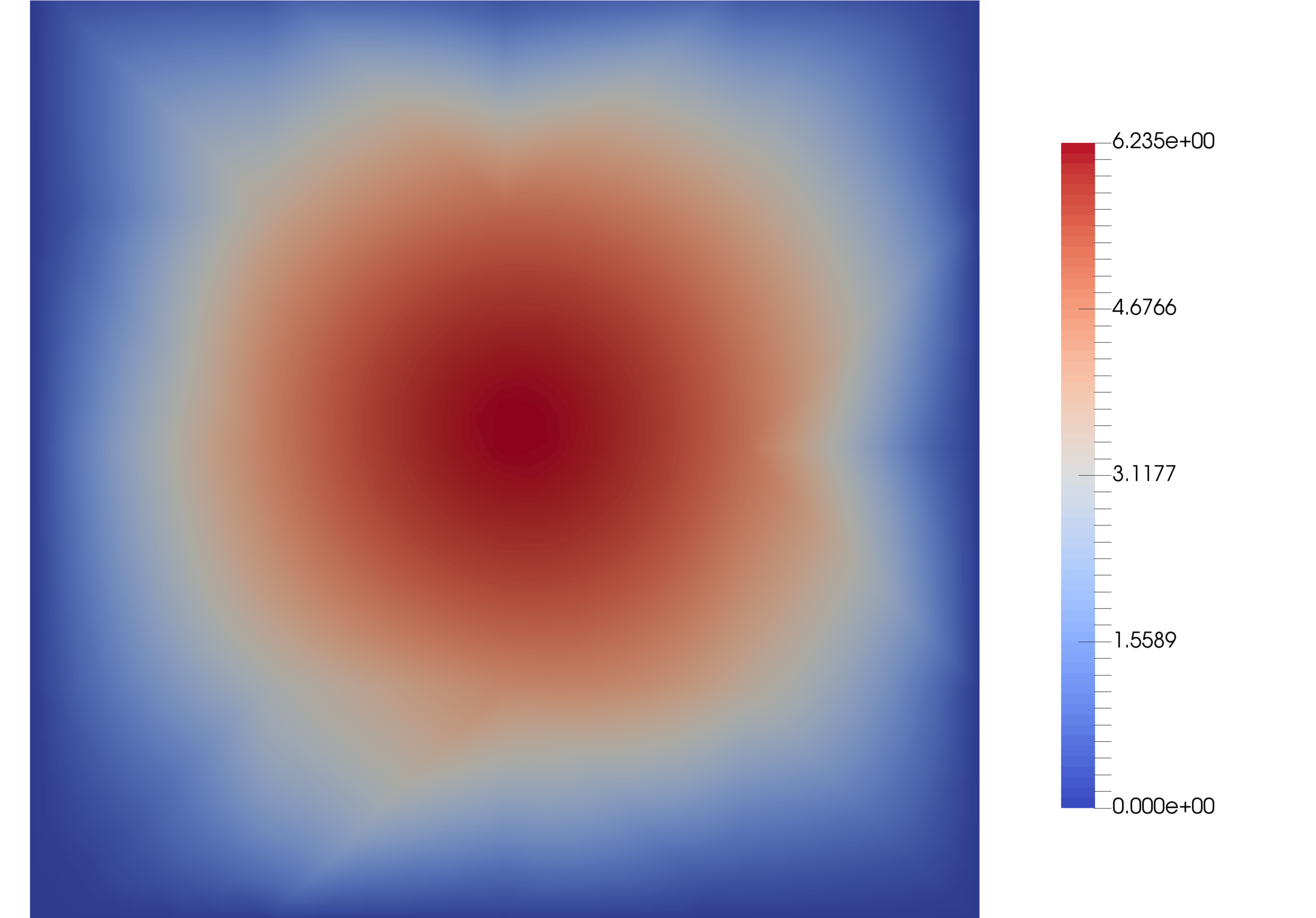}}		 
		\caption{ Solutions of the two transformed problems on $\Omega$ obtained by mapping the random domains $\Omega(\bs \theta^1)$ and $\Omega(\bs \theta^2)$ shown in Figure~\ref{fig:rand_doms_num_exp_effect_trans}.}
		\label{fig:rand_doms_num_exp_solns}
	\end{center}
\end{figure}

\tpurp{The plots of the transformed numerical solutions, $U^1$ and $U^2$, corresponding to $\bs \theta^1$ and $\bs \theta^2$ respectively are shown in Figure~\ref{fig:rand_doms_num_exp_solns}. The effects of sliver like subdomains present in $\Omega(\bs \theta^1)$ are apparent in the plot of $U^1$ while the plot of $U^2$ exhibits a much smoother solution.
The error in the QoI  for $U^1$  is $8.6E-3$ whereas the error in the QoI for $U^2$ is $ 7.4 E-3$. These errors are approximated by using  more accurate reference values of the QoI computed using piecewise quadratic continuous elements on a refined mesh of the untransformed domain. The error in the QoI for $U^1$ is greater than the error in the QoI for $U^2$.
The large error for $U^1$ is expected since the ratio $\displaystyle \max_d \left( \frac{\kappa_d(\bs \theta^1)}{\rho_d(\bs \theta^1)} \right)$ is quite large and hence the error is adversely affected in light of \eqref{eq:up_bnd_h1_err_trans}.}

\subsection{A posteriori analysis of the transformed problem}
We  derive an accurate, computable a posteriori error estimate for the QoI computed from a numerical solution. The estimate employs computable residuals and a generalized Green's function satisfying an adjoint equation. The Green's function quantifies the effects of stability in the accumulation, cancellation and propagation of the errors in the QoI. 
The strong form of the adjoint problem corresponding to the transformed problem \eqref{mapped_problem_2d_0_compact} is,
\begin{equation}
\label{eq:adjoint_strong_form_trans}
\left \{
\begin{aligned}
-\nabla \cdot \pars{ \mathbf{A}^n(y;\bs\theta^n) \nabla \eta^n(y;\bs\theta^n) } &= F^n(y;\bs\theta^n), & y &\in \Omega,\\
\eta^n(y;\bs\theta^n) &= \tilde{\psi}(y), & y &\in \partial\Omega \cap \partial\Omega,
\end{aligned}
\right.
\end{equation}
where $\tilde{\psi}(y)$ is given as in \eqref{eq:qoi_trans}.

\begin{theorem}[Error Representation for the Transformed Problem] 

Let $e^n = u^n - U^n $. Then we have the error representation,
\begin{equation}
\label{eq:err_est_trans}
Q(e^n; \bs \theta^n) = \int_\Omega F^n \eta^n \, \mathrm{d}y - \int_\Omega \mathbf{A}^n \nabla U^n \cdot \nabla \eta^n \,\mathrm{d}y.
\end{equation}
\end{theorem}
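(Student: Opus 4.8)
The plan is to run the standard adjoint-weighted residual argument. The first step is to exploit the linearity of the QoI in its first argument: since $Q(\cdot;\bs\theta^n)$ is the linear functional $v \mapsto \int_\Omega v\,\tilde\psi\,\dd{y}$ from \eqref{eq:qoi_trans}, I would write $Q(e^n;\bs\theta^n) = Q(u^n;\bs\theta^n) - Q(U^n;\bs\theta^n) = \int_\Omega e^n\,\tilde\psi\,\dd{y}$. This reduces the theorem to rewriting $\int_\Omega e^n\,\tilde\psi\,\dd{y}$ as the residual of $U^n$ weighted against the adjoint solution, so the whole point is to route the error functional through the adjoint problem \eqref{eq:adjoint_strong_form_trans}.

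Next I would use the variational characterization of the adjoint, namely that $\eta^n$ satisfies $B_\Omega(v,\eta^n) = \int_\Omega v\,\tilde\psi\,\dd{y}$ for all $v \in V = H_0^1(\Omega)$, with $B_\Omega$ the bilinear form of Lemma~\ref{lem:trans_cont_coer}. Because $\mathbf{A}^n$ is symmetric, $B_\Omega(\cdot,\cdot)$ is symmetric, so $\eta^n$ may be placed in either slot. The crucial admissibility observation is that $e^n = u^n - U^n \in V$, since $u^n$ solves \eqref{eq:weak_form_trans} in $V$ and $V_h(\Omega) \subset V$ so that $U^n \in V$ as well; both vanish on $\partial\Omega$. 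Choosing the test function $v = e^n$ then gives $Q(e^n;\bs\theta^n) = \int_\Omega e^n\,\tilde\psi\,\dd{y} = B_\Omega(e^n,\eta^n)$.

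Finally I would expand $B_\Omega(e^n,\eta^n) = B_\Omega(u^n,\eta^n) - B_\Omega(U^n,\eta^n)$ and substitute the primal weak form \eqref{eq:weak_form_trans} with the admissible test function $\eta^n \in V$ to replace the first term by $B_\Omega(u^n,\eta^n) = \int_\Omega F^n\eta^n\,\dd{y}$. This yields
\[
Q(e^n;\bs\theta^n) = \int_\Omega F^n\eta^n\,\dd{y} - \int_\Omega \mathbf{A}^n\nabla U^n\cdot\nabla\eta^n\,\dd{y},
\]
which is exactly \eqref{eq:err_est_trans}. The algebra is a few lines; the one genuinely delicate point is the bookkeeping of the adjoint's boundary data. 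For the argument to close I need $\eta^n$ to play two roles simultaneously — the Riesz representer of the QoI (so the QoI density $\tilde\psi$ must enter the adjoint so that $B_\Omega(v,\eta^n) = \int_\Omega v\,\tilde\psi\,\dd{y}$) and a legitimate test function in the primal weak form (so $\eta^n \in H_0^1(\Omega)$, i.e. homogeneous Dirichlet data). Ensuring these are consistent, so that no boundary term survives the integration by parts linking the strong and weak forms of \eqref{eq:adjoint_strong_form_trans}, is the step I would check most carefully; everything else is routine use of the weak forms already established.
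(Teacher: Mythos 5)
Your argument is correct and is precisely the standard adjoint-weighted-residual computation that the paper itself invokes only by citation (its stated proof is a pointer to Section~8.1 of the Acta Numerica reference), so you and the paper are taking the same route. One remark: the version of the adjoint you use --- source $\tilde\psi$ with homogeneous Dirichlet data, so that $B_\Omega(v,\eta^n)=\int_\Omega v\,\tilde\psi\,\mathrm{d}y$ for all $v\in H_0^1(\Omega)$ and $\eta^n$ is an admissible test function in \eqref{eq:weak_form_trans} --- is the one actually required for the identity to close; the displayed strong form \eqref{eq:adjoint_strong_form_trans} in the paper has $F^n$ and $\tilde\psi$ in swapped positions (evidently a typo), and the boundary-data bookkeeping you flag as the delicate step is exactly what detects and resolves this.
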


\begin{proof}
The proof is standard, e.g. see  Section 8.1 in \cite{eehj_actanum_95}.
\end{proof}

\subsubsection{Numerical Example for error estimate of the transformed problem}
\tpurp{
In the earlier numerical example in \S \ref{sec:num_ex_trans_dom_effect}, the errors in the QoI corresponing to the solutions $U^1$ and $U^2$ were   $Q(e^1; \bs \theta^1) = 8.6E-3$ and $Q(e^2; \bs \theta^2) = 8.6E-3$ respectively. We now estimate the error using the error estimate obtained from \eqref{eq:err_est_trans}. The adjoint solution is computed using the standard space of piecewise quadratic continuous functions. The error estimate obtained for $Q(e^1; \bs \theta^1)$ is  $8.06E-3$ while for $Q(e^1; \bs \theta^1)$ is $7.2E-3$. These have effectivity ratios of $0.93$ and $0.97$ and hence are quite accurate.}

\section{Lions Non-overlapping Domain Decomposition}
\label{sec:lions_domain_decomp}
Equation \eqref{weak_form_of_problem} yields a large coupled system of discrete equations for the finite element approximation.  Following \cite{estep2008nonparametric,estep2009nonparametric}, it is natural to solve the discrete equations using a non-overlapping domain decomposition iteration. In this case, the a posteriori error analysis must be extended to include the effects of the iterative solution of the accuracy of the approximation. For notational simplicity, we drop the superscript $n$ indicating the sample number. In particular, \eqref{eq:weak_form_trans} is rewritten as,
\begin{equation}
\label{eq:weak_form_trans_no_n}
	\begin{aligned}
		\sint{\Omega}{}{\mathbf{A}(y;\bs\theta)\nabla u \cdot \nabla v}{y}
			= \sint{\Omega}{}{F(y;\bs\theta) v_d}{y},
	\end{aligned}
\end{equation}

The non-overlapping domain decomposition solution is formed by employing the Lions domain decomposition algorithm~\cite{lions1990schwarz}. The continuous Lions problem is to find an iterative solution $u_d^{(i)}$,  where the superscript ${(i)}$ refers to the approximation at iteration $i$.. That is, given an set of initial guesses $\lbrace u_d^{(0)}, d = 1, \ldots , D\rbrace$ for the $D$ subdomains, we solve for $i = 1 , 2, \ldots $,
\begin{equation}
\left\{
\begin{aligned}
-\nabla  \cdot \mathbf{A}_d \nabla u_d^{(i)} &= F_d, \quad &y \in \Omega_d,\\
\lambda u_d^{(i)} + \mathbf{n}_d \cdot \mathbf{A}_d \nabla u_d^{(i)} &= \lambda u_{\tilde{d}}^{i-1} - \mathbf{n}_{\tilde{d}} \cdot \mathbf{A}_{\tilde{d}}(y)
 \nabla u_{\tilde{d}}^{i-1}, \quad &y \in \partial \Omega_d \cap \partial \Omega_{\tilde{d}},\\
 u_d &= 0, \quad &y \in \partial \Omega_d \cap \partial \Omega,
\end{aligned}
\right.
\end{equation}
where $\mathbf{n}_d  = -\mathbf{n}_{\tilde{d}}$ is the unit normal and  $\lambda$ is some constant. Lions proved that as $i \rightarrow \infty$, $u_d^{(i)}(y) \rightarrow u(y)$ ~\cite{lions1990schwarz}. 

The discrete analog of Lions domain decomposition involves finding a numerical solution for a finite number of iterations $I$. We let the restriction of $\mathcal{T}_h^d$ on $\Omega_d$ be $\mathcal{T}_h^d$ and define $V^h(\Omega_d) \subset H^1(\Omega_d)$ as the set of continuous piecewise linear functions on $\mathcal{T}_h^d$. That is, given an set of initial guesses $\lbrace U_d^{(0)}, d = 1, \ldots , D\rbrace$ for the $D$ subdomains and fixed $\lambda \geq 0$, we solve for $U_d^{(i)} \in V_h(\Omega_d)$,  $i = 1 , 2, \ldots, I$,
\begin{equation}
\begin{aligned}
 (\mathbf{A}_d \nabla U_d^{(i)}, \nabla v)_d &+ \sum_{\tilde{d} \in d^\prime}  \lambda \langle  U_d^{(i)},v\rangle_{d \cap \tilde{d}} \\
  &= (F_d,v)_d + \sum_{\tilde{d} \in d^\prime}  \left(    \lambda \langle  U_{\tilde{d}}^{(i-1)},v\rangle_{d \cap \tilde{d}}  - \langle \mathbf{n}_{\tilde{d}} \cdot \mathbf{A}_{\tilde{d}} \nabla U^{(i-1)}_{\tilde{d}} , v \rangle_{d \cap \tilde{d}}\right)
\end{aligned}
\end{equation}
for all $v \in V^h(\Omega_d)$  where,
\begin{equation*}
(f,g)_d = \int_{\Omega_d} f(y) g(y) \, \mathrm{d}y \qquad \text{ and } \qquad \langle f,g \rangle_{d \cap \tilde{d}} = \int_{\partial \Omega_d \cap \partial \Omega_{\tilde{d}}} f(y) g(y) \, \mathrm{d} y.
\end{equation*}

\subsection{A posteriori analysis of Lions Domain Decomposition}
We denote the numerical solution after $i$ iterations as $U^{(i)}$ so that $U^{(i)}\lvert_{\Omega_d} = U^{(i)}_d$. Then the computed value of the QoI at iteration  is obtained from \eqref{eq:qoi_trans} as,
\begin{equation}
Q(U^{(i)}) = \int_\Omega U^{(i)} \tilde{\psi} \, \mathrm{d}y = \sum_{d=1}^D \int_{\Omega_d} U_d^{(i)} \tilde{\psi} \, \mathrm{d}y = \sum_{d=1}^D (U_d^{(i)} , \tilde{\psi})_d.
\end{equation}

\begin{theorem}

Let $s^{(i)} = u - U^{(i)}$. Then the error in the QoI at iteration $i$ of Lion's domain decomposition algorithm is represented as,
\begin{equation}
Q(u - U^{(i)}) = DE^{(i)} + IE^{(i)} + CE^{(i)},
\end{equation}
where $DE^{(i)}$, $IE^{(i)}$, $CE^{(i)}$ represent the discretization, iteration and transformation contributions to the total error and are given as,
\begin{align*}
DE^{(i)}  &=   \sum_{d=1}^D \bigg[ (F,\eta)_d +
-( \mathbf{A}_d \nabla U^{(i)}_d, \nabla \eta )_d  + \sum_{\tilde{d}\in d^\prime}  \left(   \langle -\lambda U_d^{(i)} +  \lambda U_{\tilde{d}}^{(i-1)}- \mathbf{n}_{\tilde{d}} \cdot \mathbf{A}_{\tilde{d}} \nabla U^{(i-1)}_{\tilde{d}} , \eta \rangle_{d \cap \tilde{d}}   \right)
  \bigg],\\\
IE^{(i)}  &= \sum_{d=1}^D \sum_{\tilde{d}\in d^\prime} \left(   \langle  \lambda  U_d^{(i)}-\lambda  U_{{d}}^{(i-1)} , \eta \rangle_{d \cap \tilde{d}} +  \langle  \mathbf{n}_d \cdot \nabla U_d^{(i-1)}- \mathbf{n}_d \cdot \nabla U_{{d}}^{(i)} , \eta \rangle_{d \cap \tilde{d}}\right), \\
CE^{(i)}&=    \frac{1}{2} \sum_{d=1}^D \sum_{\tilde{d}\in d^\prime}  \left(
 \langle \mathbf{n}_{{d}} \cdot  \mathbf{A}_{{d}} \nabla U^{(i)}_{{d}}  -  \mathbf{n}_{{d}} \cdot \mathbf{A}_{\tilde{d}} \nabla U^{(i)}_{\tilde{d}}, \eta \rangle_{d \cap \tilde{d}} + 
 \langle \mathbf{n}_d \cdot  \mathbf{A}_d \nabla \eta,  U_d^{(i)} - U_{\tilde{d}}^{(i)} \rangle_{d \cap \tilde{d}} \right)
\end{align*}

\end{theorem}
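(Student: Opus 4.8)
The plan is to obtain the representation by testing the QoI error against the adjoint (generalized Green's function) $\eta$, and then successively substituting the equations satisfied by the exact solution $u$, by the discrete Lions iterate $U^{(i)}$, and by $\eta$ itself. The one structural feature that drives the whole computation is that, unlike $u$, the iterate $U^{(i)}$ is only \emph{piecewise} $H^1$: across each interface $\Gamma_{d\tilde{d}}=\partial\Omega_d\cap\partial\Omega_{\tilde{d}}$ neither its trace nor its conormal flux $\mathbf{n}_d\cdot\mathbf{A}_d\nabla U_d^{(i)}$ need match those of its neighbour, and these two mismatches are exactly what assembles into $CE^{(i)}$. I let $\eta$ denote the solution of the adjoint problem associated with the QoI \eqref{eq:qoi_trans}, i.e. with volume data $\tilde{\psi}$ and homogeneous Dirichlet data; since $\mathbf{A}$ is symmetric and $\eta\in H_0^1(\Omega)$ solves the global problem, $\eta$ is continuous and its conormal flux $\mathbf{n}_d\cdot\mathbf{A}_d\nabla\eta$ is continuous across every $\Gamma_{d\tilde{d}}$, facts I use repeatedly.

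First I would write $Q(s^{(i)})=\sum_d (s^{(i)}_d,\tilde{\psi})_d$ with $s^{(i)}_d=u_d-U^{(i)}_d$, replace $\tilde{\psi}=-\nabla\cdot(\mathbf{A}_d\nabla\eta)$ on each $\Omega_d$, and integrate by parts subdomain-by-subdomain to get
\[
Q(s^{(i)})=\sum_d(\mathbf{A}_d\nabla s^{(i)}_d,\nabla\eta)_d-\sum_d\big\langle \mathbf{n}_d\cdot\mathbf{A}_d\nabla\eta,\;s^{(i)}_d\big\rangle_{\partial\Omega_d}.
\]
On the exterior boundary both $s^{(i)}_d=0$ (homogeneous Dirichlet data for $u$ and for $U^{(i)}$) and $\eta=0$, so only the interior interfaces survive in the boundary sum.

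Next I would split the volume term into its $u$ and $U^{(i)}$ parts. For the $u$ part I use the exact subdomain identity $(\mathbf{A}_d\nabla u_d,\nabla\eta)_d=(F_d,\eta)_d+\langle \mathbf{n}_d\cdot\mathbf{A}_d\nabla u_d,\eta\rangle_{\partial\Omega_d}$; summing over $d$, the unknown interface fluxes of $u$ cancel in pairs, because the exact flux is continuous ($\mathbf{n}_d\cdot\mathbf{A}_d\nabla u_d=-\mathbf{n}_{\tilde{d}}\cdot\mathbf{A}_{\tilde{d}}\nabla u_{\tilde{d}}$) and $\eta$ is continuous, leaving $\sum_d(\mathbf{A}_d\nabla u_d,\nabla\eta)_d=\sum_d(F_d,\eta)_d$. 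In the boundary sum I pair the two contributions of each interface; using continuity of the conormal flux of $\eta$ together with $u_d=u_{\tilde{d}}$, the unknown trace of $u$ drops out and only the computable solution jump $U^{(i)}_d-U^{(i)}_{\tilde{d}}$ remains, producing exactly the second term of $CE^{(i)}$. At this stage
\[
Q(s^{(i)})=\sum_d\big[(F_d,\eta)_d-(\mathbf{A}_d\nabla U^{(i)}_d,\nabla\eta)_d\big]+\tfrac12\sum_d\sum_{\tilde{d}\in d'}\big\langle \mathbf{n}_d\cdot\mathbf{A}_d\nabla\eta,\,U^{(i)}_d-U^{(i)}_{\tilde{d}}\big\rangle_{d\cap\tilde{d}}.
\]
To expose $DE^{(i)}$ I then add and subtract the interface data of the discrete Lions iteration: the first bracket equals $DE^{(i)}$ plus the lagged interface terms $\sum_d\sum_{\tilde{d}\in d'}\langle \lambda U_d^{(i)}-\lambda U_{\tilde{d}}^{(i-1)}+\mathbf{n}_{\tilde{d}}\cdot\mathbf{A}_{\tilde{d}}\nabla U_{\tilde{d}}^{(i-1)},\eta\rangle_{d\cap\tilde{d}}$, since $DE^{(i)}$ is precisely the residual of the discrete scheme tested against $\eta$ (which vanishes on $V_h$ by Galerkin orthogonality).

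Finally I would reorganize those lagged terms together with the second $CE^{(i)}$ term into $IE^{(i)}+CE^{(i)}$. The only tool needed is the symmetry of the interface double sum under $d\leftrightarrow\tilde{d}$ (with $\mathbf{n}_d=-\mathbf{n}_{\tilde{d}}$, which makes each jump $U^{(i)}_d-U^{(i)}_{\tilde{d}}$ and $U^{(i-1)}_d-U^{(i-1)}_{\tilde{d}}$ odd, so their undifferentiated sums over ordered pairs vanish). Re-indexing $\sum_d\sum_{\tilde{d}\in d'}\langle\mathbf{n}_{\tilde{d}}\cdot\mathbf{A}_{\tilde{d}}\nabla U_{\tilde{d}}^{(i-1)},\eta\rangle$ as $\sum_d\sum_{\tilde{d}\in d'}\langle\mathbf{n}_d\cdot\mathbf{A}_d\nabla U_d^{(i-1)},\eta\rangle$, and using $\mathbf{n}_d\cdot\mathbf{A}_{\tilde{d}}\nabla U^{(i)}_{\tilde{d}}=-\mathbf{n}_{\tilde{d}}\cdot\mathbf{A}_{\tilde{d}}\nabla U^{(i)}_{\tilde{d}}$, the $\lambda$ part collapses to the $IE^{(i)}$ form $\langle\lambda U^{(i)}_d-\lambda U^{(i-1)}_d,\eta\rangle$, while the flux part splits into the $IE^{(i)}$ flux difference plus the first $CE^{(i)}$ flux jump, the residual single-flux term cancelling against itself under the swap. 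Collecting everything yields $Q(u-U^{(i)})=DE^{(i)}+IE^{(i)}+CE^{(i)}$. The main obstacle is not any single estimate but the bookkeeping of these interface integrals: one must keep straight which quantities are continuous (the exact $u$, its flux, $\eta$, and its conormal flux) and hence cancel, versus which are discontinuous (the traces and fluxes of $U^{(i)}$) and hence survive as $CE^{(i)}$, and must insert the iteration identity at the right place so the lagged $(i-1)$ data lands in $IE^{(i)}$ rather than contaminating $DE^{(i)}$.
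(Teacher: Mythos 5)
Your proposal is correct and follows essentially the same route as the paper's proof: test the error against the adjoint $\eta$, integrate by parts subdomain-by-subdomain, use the continuity of $u$, its flux, $\eta$, and its conormal flux (together with the weak form satisfied by $u$) to eliminate the exact-solution interface terms, and then add and subtract the Lions interface data while re-indexing the interface double sums via $\mathbf{n}_d=-\mathbf{n}_{\tilde d}$ to separate $DE^{(i)}$, $IE^{(i)}$, and $CE^{(i)}$. The only cosmetic difference is that you symmetrize the $\langle\mathbf{n}_d\cdot\mathbf{A}_d\nabla\eta, U_d^{(i)}\rangle$ term immediately, whereas the paper defers this to its final identity; the bookkeeping is otherwise identical.
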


\begin{proof}
Multiplying \eqref{eq:adjoint_strong_form_trans} by $e^{(i)}_d$ and integrating by parts on $\Omega_d$ yields,
\begin{equation}
\label{eq:lions_qoi_proof_a}
(e^{(i)}, \tilde{\psi})_d = ( \mathbf{A}_d \nabla e^{(i)}_d, \nabla \eta )_d - \sum_{\tilde{d} \in d^\prime} \langle \mathbf{n}_d \cdot \mathbf{A}_d \nabla \eta , e_d^{(i)}\rangle_{d \cap \tilde{d}}.
\end{equation}
Summing \eqref{eq:lions_qoi_proof_a} over all domains, and noting that since $u$ is the true solution, we have $\langle \mathbf{n}_d \cdot \mathbf{A}_d \nabla \eta, u \rangle_{d \cap \tilde{d}} = -\langle \mathbf{n}_{\tilde{d}} \cdot \mathbf{A}_d \nabla \eta  , u\rangle_{d \cap \tilde{d}} $,
\begin{equation}
\label{eq:lions_qoi_proof_b}
Q(e^{(i)}) =  \sum_{d=1}^D \bigg[ (F,\eta)_d
-( \mathbf{A}_d \nabla U^{(i)}_d, \nabla \eta )_d + \sum_{\tilde{d} \in d^\prime} \langle \mathbf{n}_d \cdot  \mathbf{A}_d \nabla \eta, U_d^{(i)}\rangle_{d \cap \tilde{d}} \bigg].
\end{equation}
where we also used \eqref{eq:weak_form_trans}. 
Now consider,
\begin{equation}
\label{eq:lions_qoi_proof_c}
 \sum_{d=1}^D \sum_{\tilde{d}\in d^\prime}   \langle -\lambda U_{{d}}^{(i)} + \lambda U_d^{(i-1)}, \eta
 \rangle_{d \cap \tilde{d}} = \sum_{d=1}^D \sum_{\tilde{d}\in d^\prime}    \langle \lambda  U_d^{(i)}- \lambda U_{{d}}^{(i-1)}  , \eta
 \rangle_{d \cap \tilde{d}}  
\end{equation}
Also,
\begin{equation}
\label{eq:lions_qoi_proof_d}
\begin{aligned}
&\sum_{d=1}^D \sum_{\tilde{d}\in d^\prime}  \langle \mathbf{n}_{{d}} \cdot \mathbf{A}_{{d}} \nabla U^{(i-1)}_{{d}} , \eta \rangle_{d \cap {d}} - \langle \mathbf{n}_{{d}} \cdot \mathbf{A}_{{d}} \nabla U^{(i)}_{{d}} , \eta \rangle_{d \cap {d}}, \\
&= \sum_{d=1}^D \sum_{\tilde{d}\in d^\prime}  \langle \mathbf{n}_{\tilde{d}} \cdot \mathbf{A}_{\tilde{d}} \nabla U^{(i-1)}_{\tilde{d}} , \eta \rangle_{d \cap {d}}- \langle \mathbf{n}_{{d}} \cdot \mathbf{A}_{{d}} \nabla U^{(i)}_{{d}} , \eta \rangle_{d \cap {d}}. \\
&= \sum_{d=1}^D \sum_{\tilde{d}\in d^\prime}  \langle \mathbf{n}_{\tilde{d}} \cdot \mathbf{A}_{\tilde{d}} \nabla U^{(i-1)}_{\tilde{d}} , \eta \rangle_{d \cap {d}}- 
\frac{1}{2} \langle \mathbf{n}_{{d}} \cdot \mathbf{A}_{{d}} \nabla U^{(i)}_{{d}}  + \mathbf{n}_{\tilde{d}} \cdot \mathbf{A}_{\tilde{d}} \nabla U^{(i)}_{\tilde{d}}, \eta \rangle_{d \cap {d}},\\
&= \sum_{d=1}^D \sum_{\tilde{d}\in d^\prime}  \langle \mathbf{n}_{\tilde{d}} \cdot \mathbf{A}_{\tilde{d}} \nabla U^{(i-1)}_{\tilde{d}} , \eta \rangle_{d \cap {d}}- 
\frac{1}{2} \langle \mathbf{n}_{{d}} \cdot \mathbf{A}_{{d}} \nabla U^{(i)}_{{d}}  - \mathbf{n}_{{d}} \cdot \mathbf{A}_{\tilde{d}} \nabla U^{(i)}_{\tilde{d}}, \eta \rangle_{d \cap {d}}, 
\end{aligned}
\end{equation}
where we used   $\mathbf{n}_d = -\mathbf{n}_{\tilde{d}}$ in  the last step. Similarly,
\begin{equation}
\label{eq:lions_qoi_proof_e}
\sum_{d=1}^D \sum_{\tilde{d} \in d^\prime} \langle \mathbf{n}_d \cdot  \mathbf{A}_d \nabla \eta, U_d^{(i)}\rangle_{d \cap \tilde{d}} = \frac{1}{2}\sum_{d=1}^D \sum_{\tilde{d} \in d^\prime} \langle \mathbf{n}_d \cdot  \mathbf{A}_d \nabla \eta, U_d^{(i)} - U_{\tilde{d}}^{(i)}\rangle_{d \cap \tilde{d}}
\end{equation}
Combining \eqref{eq:lions_qoi_proof_c}, \eqref{eq:lions_qoi_proof_d}, \eqref{eq:lions_qoi_proof_e} with \eqref{eq:lions_qoi_proof_b} and grouping terms proves the theorem.
\end{proof}

\subsubsection{Numerical Example of Lions Domain Decomposition}
The setup is similar to \ref{sec:num_ex_trans_dom_effect} corresponding to the parameter $\bs \theta^1$.
Lions domain decomposition is carried out on the transformed problem on $\Omega$. The parameter $\lambda = 5.0$ is used. 

The results are shown in Table~\ref{tab:lions_dd_1}.
\begin{table}[!ht]
\centering
\begin{tabular}{|c|c|c|c|c|c|}
\hline 
$i$ & Comp. Err & Eff. Rat. & $DE^{(i)}$ & $IE^{(i)}$ & $CE^{(i)}$\\
\hline 
1 & 0.3642 & 1.002 & 0.001795 & 0.8971 & -0.5347 \\
3 & -0.0678 & 0.9699 & 0.003994 & 0.525 & -0.5968 \\
5 & -0.1984 & 0.9913 & 0.006685 & 0.0838 & -0.2889 \\
7 & -0.08888 & 0.9928 & 0.007465 & -0.1487 & 0.05233 \\
9 & 0.02633 & 1.008 & 0.006977 & -0.138 & 0.1573 \\
11 & 0.07065 & 0.9981 & 0.00638 & -0.03515 & 0.09943 \\
13 & 0.04921 & 0.9983 & 0.0061 & 0.0281 & 0.015 \\
15 & 0.02459 & 1.001 & 0.006136 & 0.02659 & -0.008139 \\
17 & 0.02107 & 1.005 & 0.006222 & 0.001381 & 0.01346 \\
19 & 0.02831 & 1.004 & 0.006238 & -0.008807 & 0.03088 \\
21 & 0.0324 & 1.003 & 0.006215 & -0.00388 & 0.03007 \\
23 & 0.03097 & 1.002 & 0.0062 & 0.002108 & 0.02267 \\
25 & 0.02886 & 1.003 & 0.006205 & 0.002413 & 0.02024 \\
27 & 0.02858 & 1.003 & 0.006211 & 6.454e-05 & 0.0223 \\
29 & 0.02935 & 1.003 & 0.006212 & -0.0009761 & 0.02411 \\
31 & 0.02976 & 1.003 & 0.006209 & -0.0004158 & 0.02396 \\
33 & 0.0296 & 1.003 & 0.006207 & 0.0002579 & 0.02313 \\
\hline 
\end{tabular} 
\label{tab:lions_dd_1}
\end{table}

\section{A posteriori error analysis for Cumulative Distribution Function (CDF) computations} \label{sec_posteriori}

In this section, we construct a posteriori error estimates of the error in the computed distribution of
a given QoI. The estimate takes into account stochastic sources of error arising from finite sampling and deterministic sources arising from discretization of the differential equation.

\subsection{Approximating the CDF}

\tpurp{The solution $u$ depends implicitly on a random vector $\bs \theta$ and hence the QoI, $Q(u)$, is a random variable.}
We approximate the CDF,
\begin{equation*}
P(t) = P\big(\{\bs \theta: Q(u^n(\bs \theta)) \leq t\}\big) = P\big(Q \leq t\big),
\end{equation*}
using a finite number of approximate sample values $\big\{Q(U^n)\}_{i=1}^\mathcal{N}$:
\begin{equation*}
 P_N(t) = \frac{1}{\mathcal{N}} \sum_{n=1}^\mathcal{N} I\big( Q(U^n)\leq t \big),
\end{equation*}
where $I$ is the indicator function. The formal Monte Carlo solution algorithm is given in Algorithm~\ref{alg:mc}.

\begin{minipage}{0.7\textwidth}
\begin{algorithm}[H]
	Draw samples $\{\bs \theta^n\}_{n=1}^\mathcal{N}$ from the distribution of $\bs \theta$ \\
	\For{$n = 1, \cdots, \mathcal{N}$ (number of samples)}{
		Compute solutions $\{U^n\}$ to produce samples $\{Q(U^n)\}$\\
	}
	Approximate the output distribution using a standard nonparametric technique, e.g. via binning
\caption{Formal Monte Carlo algorithm}
\label{alg:mc}
\end{algorithm}
\end{minipage}

\subsection{Motivating examples}
\label{sec:motivating_example}
In general, there is a balance between the error arising from finite
sampling and discretization that should be struck for efficiency. The
results are often surprising in the sense that despite the slow
convergence of the Monte Carlo method, numerical discretization error
is often the most significant source of error. We refer to \cite{estep2008nonparametric,estep2009nonparametric}
for further discussion. \tpurp{For the case of uniform refinement, we define the normalized mesh parameter, $\tilde{h}$ as the ratio: ($h$ of given mesh / $h$ of coarsest mesh). In the numerical examples $\tilde{h}=1.0$ corresponds to a mesh with 249 vertices while $\tilde{h}=0.5$ corresponds to a mesh of 945 vertices.}

\subsubsection{Poisson Equation}
\label{sec:motivating_poisson}
We illustrate with an the Poisson equation from \S \ref{sec:num_ex_trans_dom_effect}  with $a=1$ and $f(x,y) = 200x(1-x)+200y(1-y)$. For the QoI, we choose $\psi = \chi 10 x y$ in \eqref{eq:qoi_orig_def} where $\chi$ is the characteristic function of $[0.50, 0.75] \times [0.50,0.75]$. The nominal reference domain is a unit square $[0,1]\times[0,1]$. The boundary points are perturbed according to a uniform distribution $[-0.08 , 0.08] \times [-0.08 , 0.08]$ centered around each boundary point.  The random perturbations are graphically illustrated in Figure~\ref{exp_1_fig_0_a}, where each boundary point is sampled uniformly in the square around each boundary point. Examples of two perturbed domains are shown in Figures~\ref{exp_1_fig_0_b} and \ref{exp_1_fig_0_c}.
\begin{figure}[htbp]
	\begin{center}
	\subfigure[]{\includegraphics[width=0.3\textwidth]{./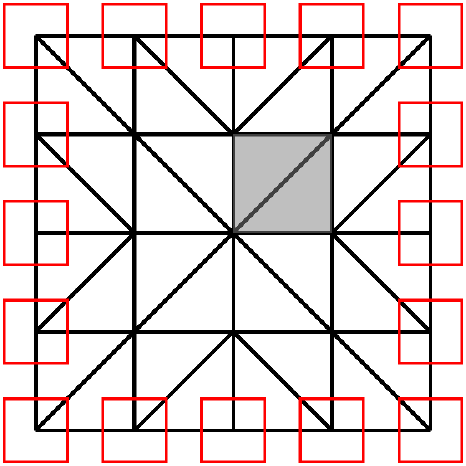}\label{exp_1_fig_0_a}}
	\subfigure[]{\includegraphics[width=0.3\textwidth]{./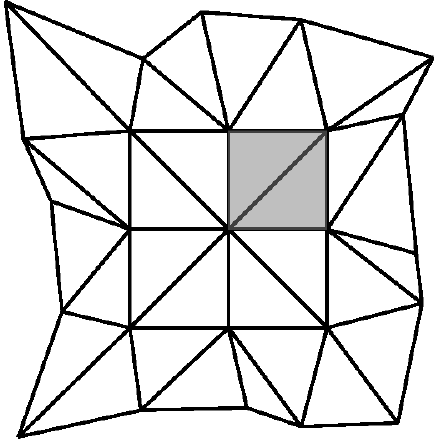}\label{exp_1_fig_0_b}}
	\subfigure[]{\includegraphics[width=0.3\textwidth]{./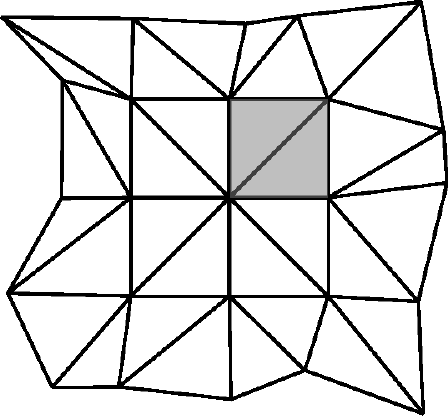}\label{exp_1_fig_0_c}}
	\caption{(a) Random domains are obtained by perturbing the boundary points uniformly $0.08$ in each direction as indicated by the squares around each point. (b) and (c) Two randomly perturbed domains.} 
	\label{exp_1_fig_0}
	\end{center}
\end{figure}

Two mesh configurations are used, a coarse mesh with normalized mesh parameter $\tilde{h} = 1.0$ and a finer mesh with $\tilde{h}=0.5$. \tpurp{The ranges of the 2-norm condition numbers of the linear systems arising from the finite element discretizations  are $[70.65, \  275.97]$ and $[318.378, \  1466.17]$ corresponding to $\tilde{h} = 1.0$ and $\tilde{h} = 0.5$ respectively.}
To approximate the true error, we compute a \emph{reference} distribution using a fine discretization with $\tilde{h} =.25$ and 10,000 samples. We plot the distributions obtained using fewer samples, $N = 100$ and $N = 1000$, and coarser meshes, $\tilde{h} = 1$ and $\tilde{h} = 0.5$ in Fig.~\ref{exp_1_fig_1}.
\begin{figure}[htbp]
	\begin{center}
		 \subfigure[]{\includegraphics[width=0.4\textwidth]{./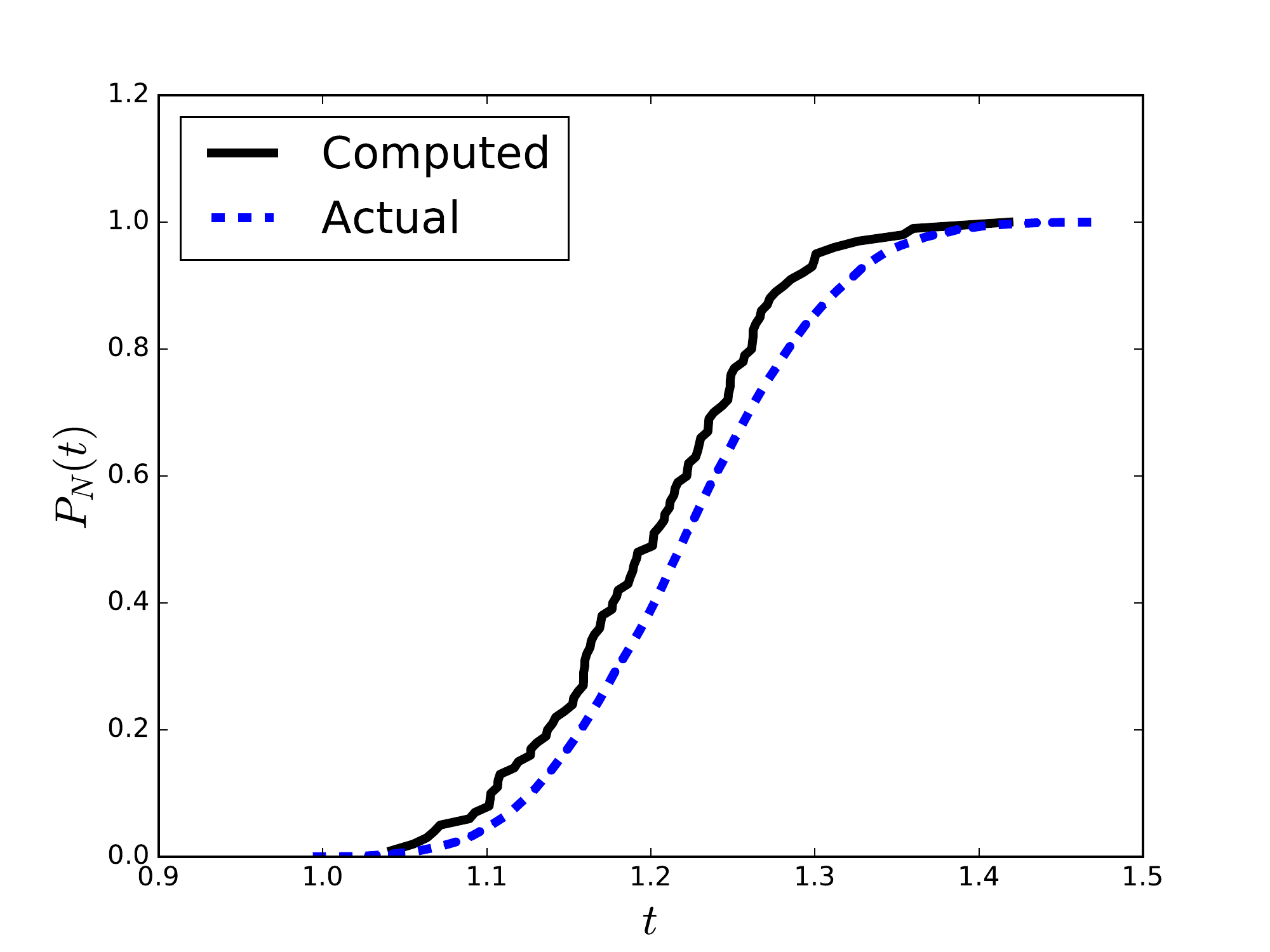}} 
		 \subfigure[]{\includegraphics[width=0.4\textwidth]{./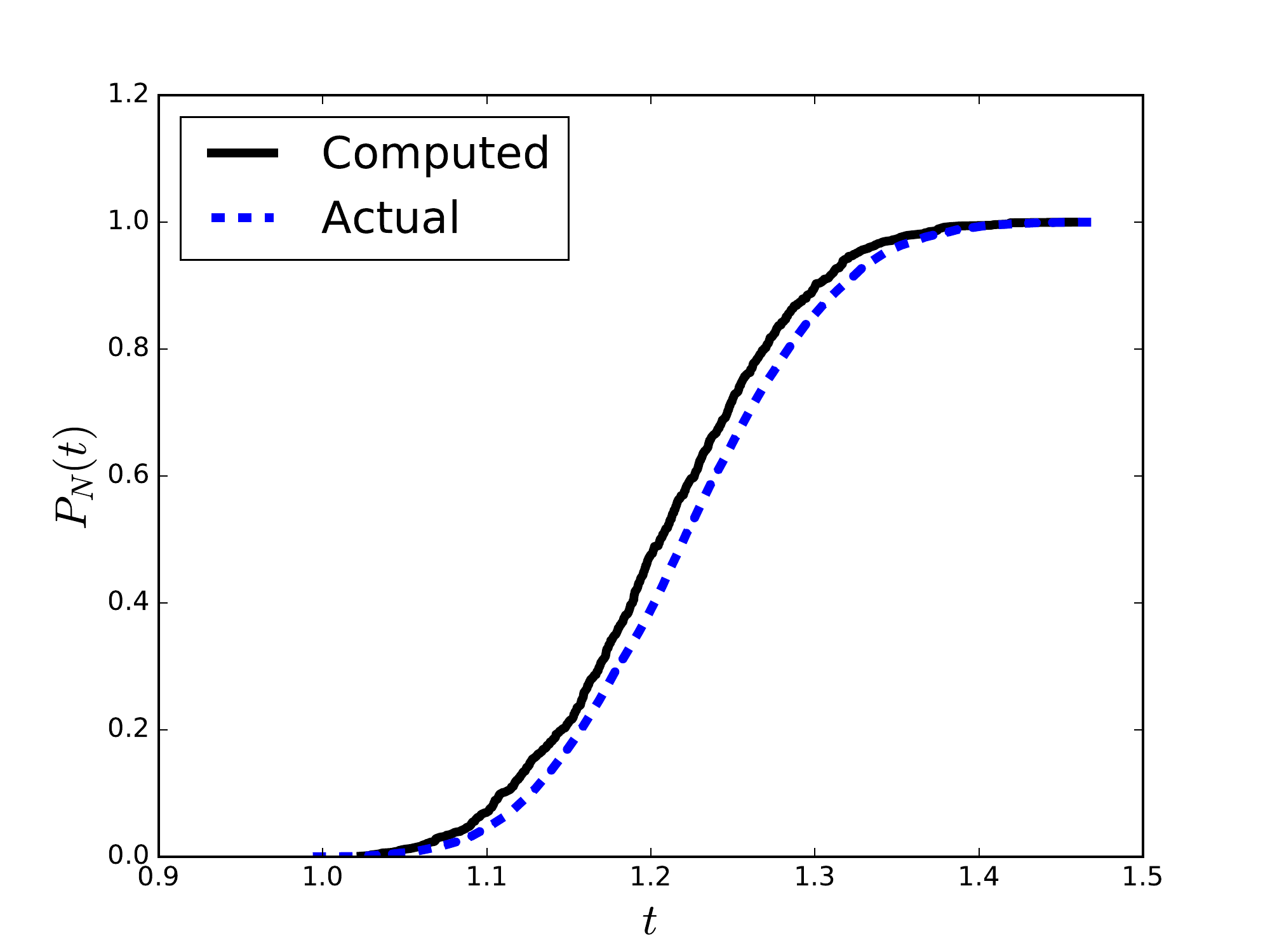}}\\
		 \subfigure[]{\includegraphics[width=0.4\textwidth]{./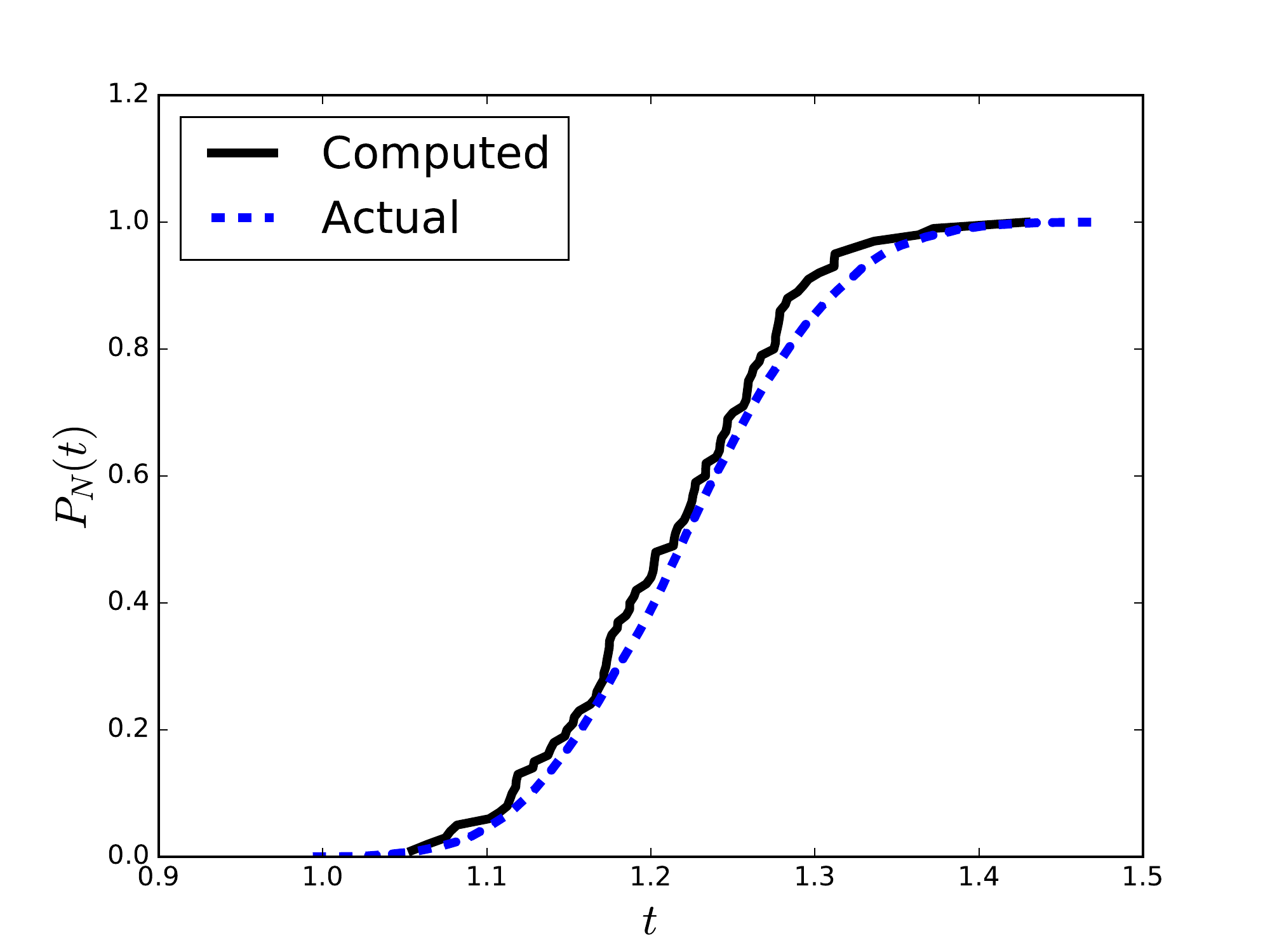}} 
		 \subfigure[]{\includegraphics[width=0.4\textwidth]{./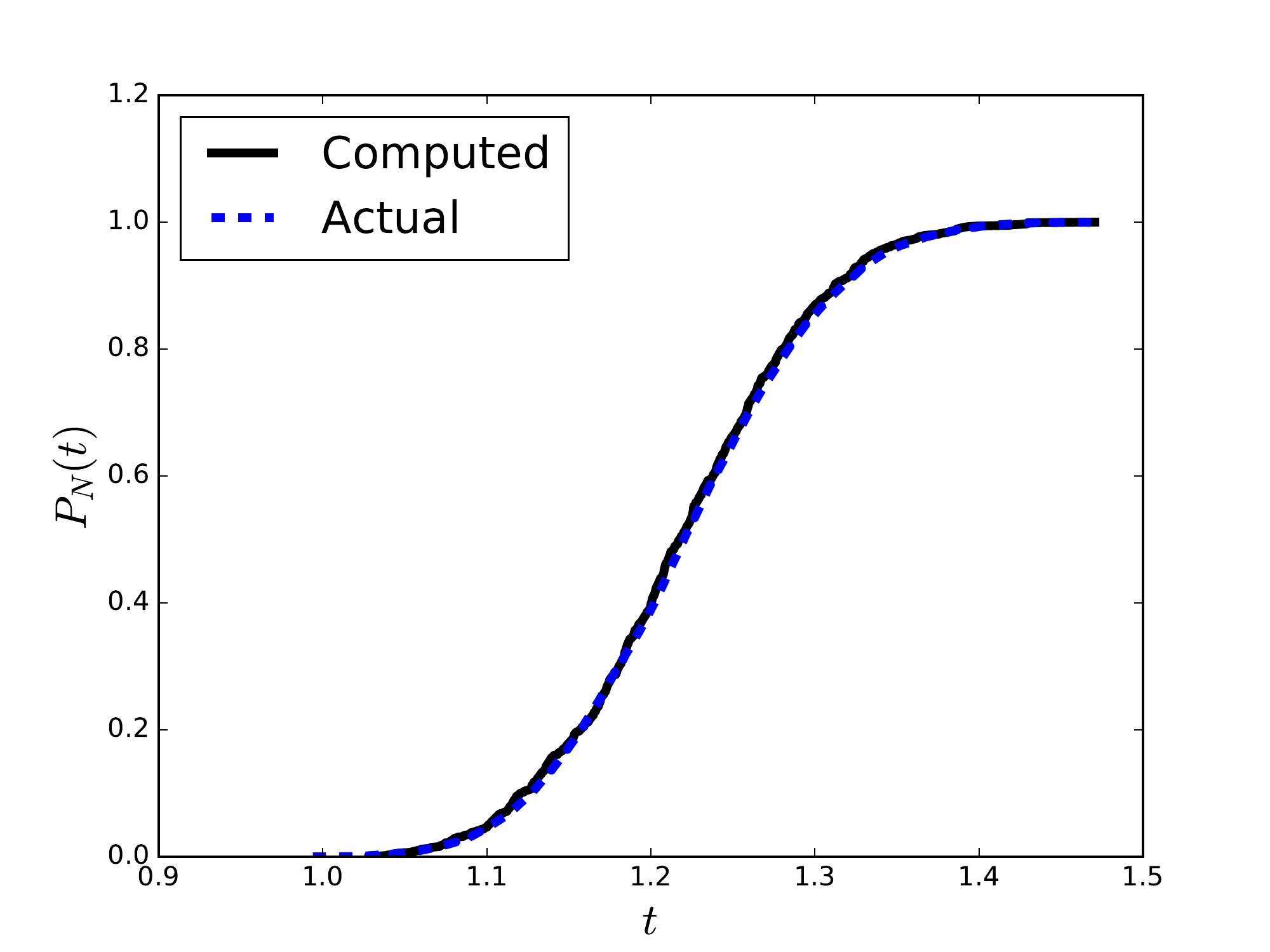}}
		\caption{
		The reference distribution (dashed line) plotted with several approximate empirical distributions. The normalized mesh parameters are $\tilde{h} = 1$ (left) and $\tilde{h} = 0.5$ (right).  The number of samples are $N = 100$ (top) and $N = 1000$ (bottom).} \label{exp_1_fig_1}
	\end{center}
\end{figure}

These results demonstrate that the discretization and finite sampling can result in significant bias and variance in the estimated distributions.  Variance can be reduced effectively by increasing $N$, while reducing bias involves refining the mesh for the differential equation solve.  The a posteriori error estimate provides a way to quantity the contributions of finite sampling and discretization to the error in the computed distribution.

\subsubsection{Convection-Diffusion Equation}
\label{sec:motivating_cd}

We consider the equation,
\begin{equation}
\left\{
	\begin{aligned}
   -\nabla \cdot \left( a(x) \nabla w(x;\bs\theta) \right)  + \mathbf{b}(x) \cdot \nabla w &= f(x), \quad && x \in \Omega(\bs \theta),
\\ \
   w(x;\bs\theta) &= 0, \quad && x \in \partial\Omega(\bs \theta).
   \end{aligned} \right.
   \label{eq:cd_random_domain}
\end{equation}
Here $\mathbf{b}$ is a convective vector field while rest of the variables have the same meaning as in \eqref{model_random_domain}. The transformed problem reads,
\begin{equation}
\footnotesize
\left\{
\begin{aligned}
&-\nabla \cdot \pars{ \mathbf{A}^n(y;\bs\theta^n) \nabla u^n(y;\bs\theta^n) } + \hat{\mathbf{b}}^n(y;\bs\theta^n) \cdot  \nabla u^n(y;\bs\theta^n) = F^n(y;\bs\theta^n), &y \in \Omega,\\
&u^n(y;\bs\theta^n) = 0,  \qquad &y \in \partial\Omega.
\end{aligned} \right.
\label{eq:cd_trans_domain}
\end{equation}
Here $\hat{\mathbf{b}}^n(y;\bs\theta^n) = \hat{\mathbf{b}}^n_d(y;\bs\theta^n)$ for $y \in \Omega_d$ defined  by,
\[\hat{\mathbf{b}}^n_d(y;\bs\theta^n) = \lvert \mathrm{det} \mathbf{J}_d^n \lvert ^{-1} \; \mathbf{J}_d^n \; \mathbf{b}(\varphi_d^n)^{-1}(y)), \qquad  y \in \Omega_d
\] 
while rest of the variables have similar definitions as in \S~\ref{sec:transformed_problem_poisson}.

We choose $\mathbf{b} = [-80, \, 0]^\top$ is the convection vector field. Other parameters are set as $a=1$, $f(x,y) = 200\sin(2\pi x)\sin(2\pi y)$. For the QoI, we choose $\psi = \chi 10 x y$ in \eqref{eq:qoi_orig_def} where $\chi$ is the characteristic function of $[0.50, 0.75] \times [0.50,0.75]$. We plot the distributions obtained using  $N = 100$ and $N = 1000$ samples, and $\tilde{h} = 1$ and $\tilde{h} = 0.5$ in Fig.~\ref{exp_cd_fig_1}. 
\tpurp{The ranges of the 2-norm condition numbers of the linear systems arising from the finite element discretizations  are $[12.8628, \  46.1225]$ and $[51.654, \  224.635]$ corresponding to $\tilde{h} = 1.0$ and $\tilde{h} = 0.5$ respectively.}
The reference distribution is computed using $4000$ samples and $\tilde{h}=0.125$.
\begin{figure}[htbp]
	\begin{center}
		 \subfigure[]{\includegraphics[width=0.4\textwidth]{./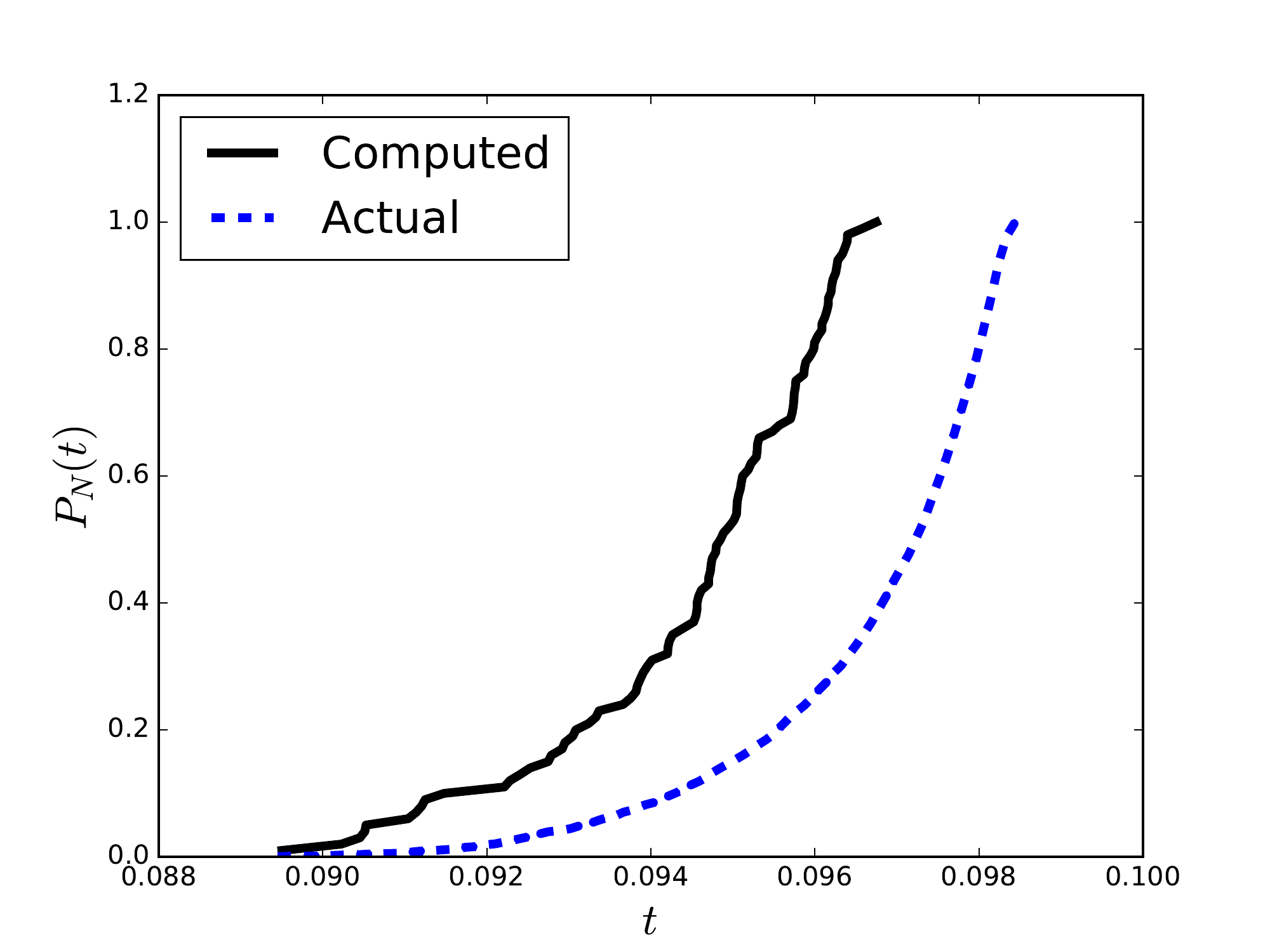}} 
		 \subfigure[]{\includegraphics[width=0.4\textwidth]{./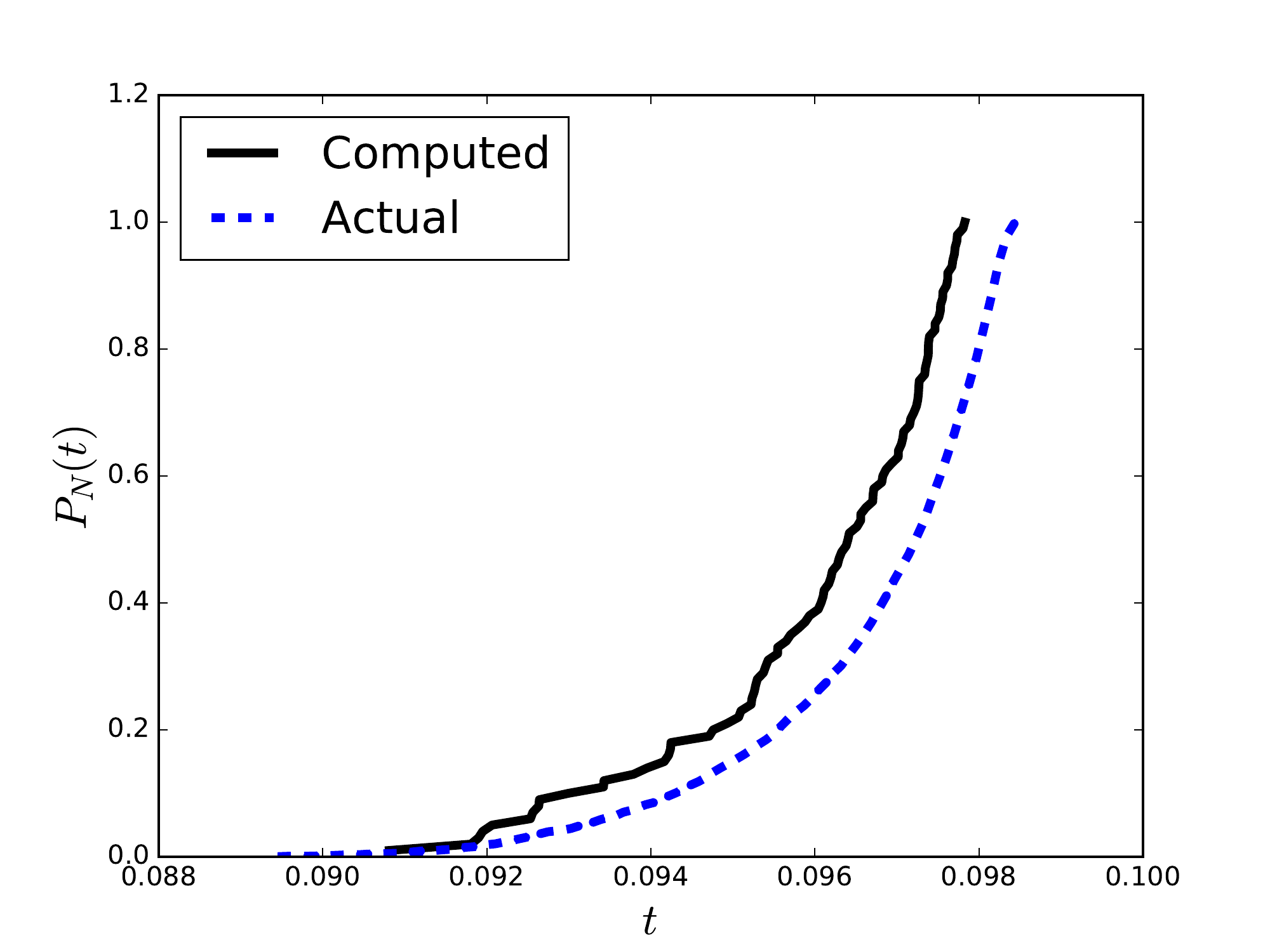}}\\
		 \subfigure[]{\includegraphics[width=0.4\textwidth]{./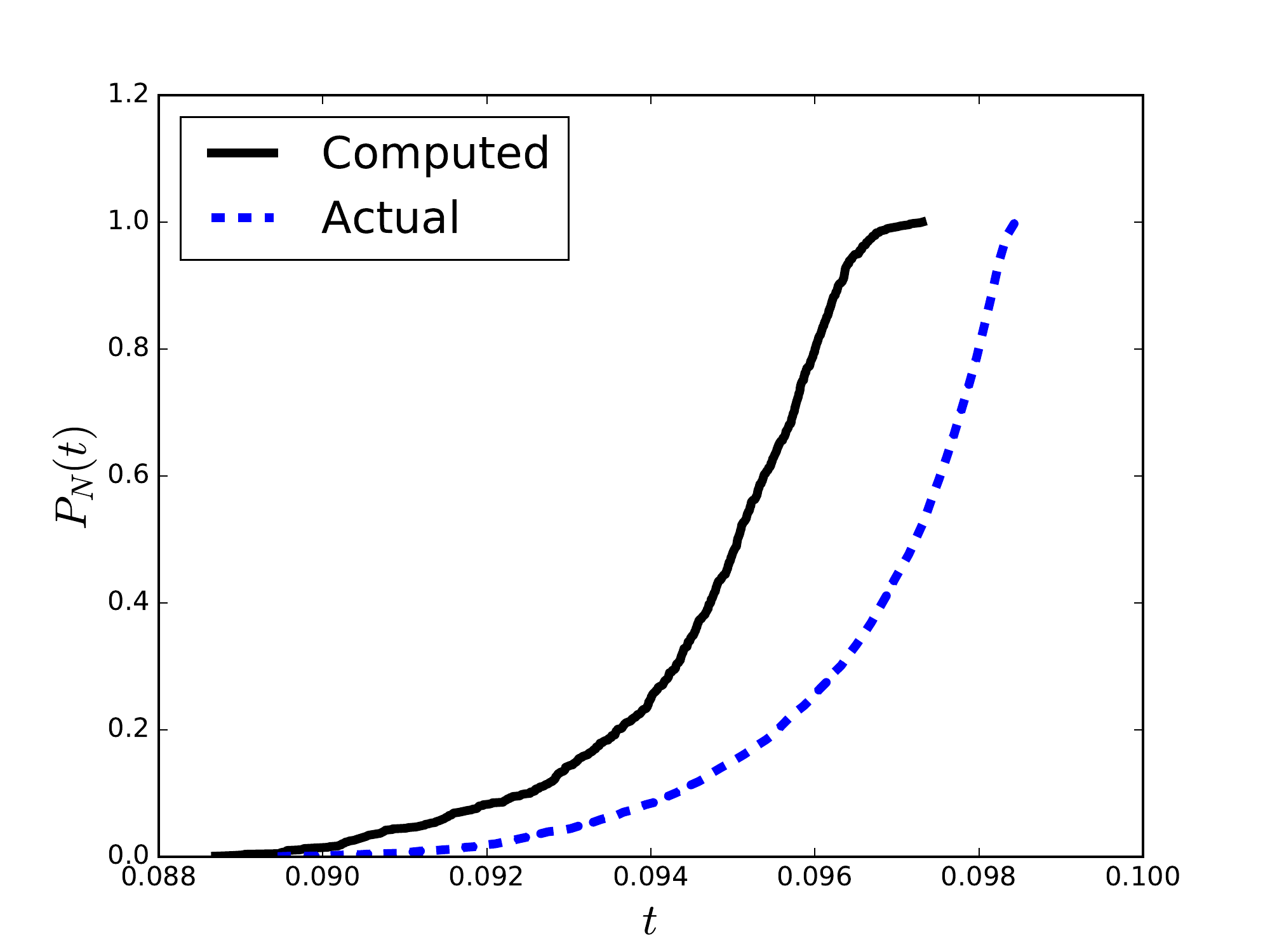}} 
		 \subfigure[]{\includegraphics[width=0.4\textwidth]{./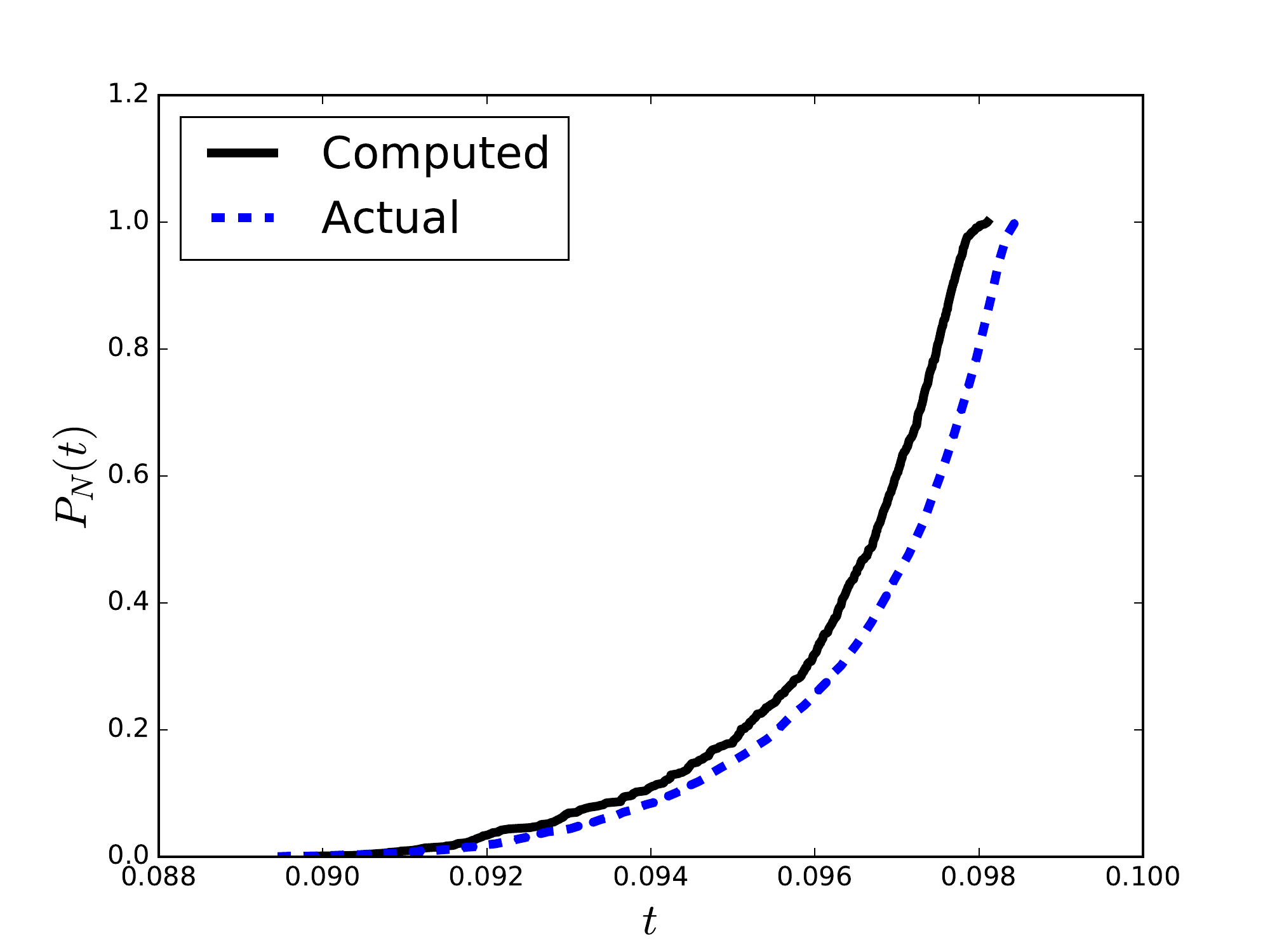}}
		\caption{
		The reference distribution (dashed line) plotted with several approximate empirical distributions. From left to right, distributions are computed using $\tilde{h} = 1$, $\tilde{h} = 0.5$,  The number of samples are $N = 100$ (top) and $N = 1000$ (bottom).}
		 \label{exp_cd_fig_1}
	\end{center}
\end{figure}

These results numerically demonstrate the claim that numerical error in CDF computations is dominated by discretization errors. Refining the spatial mesh once has a huge impact on reducing the error in the computed CDF, whereas increasing the samples has a relatively minor effect in computing a more accurate CDF.
\subsection{A posteriori error analysis for a computed distribution function}

We approximate $P(t)$ by the approximate sample distribution function,
\begin{align}
	\wh{P}_N(t) = \frac{1}{N} \sum_{n=1}^N \mathbf{1}_{(-\infty,t]}(Q(\wt{U}^{n};\bs\theta^n)), \notag
\end{align}
where $\{\wt{U}^n\}$ are the solutions corresponding to samples $\{ \bs\theta^n \}_{n=1}^N$ and $\mathbf{1}_S$ denotes the indicator function for the set $S$.  For the purpose of error analysis, we introduce the ``nominal'' sample distribution using exact model solves by,
\begin{align}
	P_N(t) = \frac{1}{N} \sum_{n=1}^N \mathbf{1}_{(-\infty,t]}(Q(u;\bs\theta^n)), \notag
\end{align}
and decompose the error into contributions arising from finite sampling and discretization of the differential equation,
\begin{align}
	P(t) - \wh{P}_N(t)
		  = (P(t) - P_N(t)) + (P_N(t) - \wh{P}_N(t)). \notag
\end{align}
This decomposition is used to derive the following error bound
\begin{theorem}[\cite{estep2008nonparametric,estep2009nonparametric}]{}  For $0 < \varepsilon < 1$,
\begin{align}
	|P(t) - \wh{P}_N(t)|
		&\leq \pars{ \frac{\wh{P}_N(t)(1-\wh{P}_N(t))}{N \varepsilon}}^{1/2}
			+ \frac{2}{N} \left| \sum_{n=1}^N \mathbf{1}_{[-|\mathcal{E}^n|,|\mathcal{E}^n|]}(t-Q(\wt{U}^{n};\bs\theta^n)) \right|
				+ \frac{1}{2N\varepsilon}, \label{error_in_dist}
\end{align}
with probability greater than or equal to $1-\varepsilon$, where $\mathcal{E}^n $ is an error estimate for the QoI computed from the sample numerical solution, i.e.,
\begin{align}
	\mathcal{E}^n \approx Q(u^{n};\bs\theta^n) - Q(\wt{U}^{n};\bs\theta^n) . \label{error_estimate_assumed}
\end{align}
\end{theorem}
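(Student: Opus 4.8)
The plan is to bound the distributional error $|P(t)-\wh P_N(t)|$ by controlling the two pieces of the stated decomposition separately: a purely stochastic sampling error $P(t)-P_N(t)$ and a deterministic discretization error $P_N(t)-\wh P_N(t)$. For the sampling term, I would treat the $N$ exact-solve indicator samples $\mathbf{1}_{(-\infty,t]}(Q(u;\bs\theta^n))$ as i.i.d.\ Bernoulli random variables with mean $P(t)$, so that $P_N(t)$ is an unbiased empirical estimator of $P(t)$. The natural tool here is a concentration inequality: the variance of each Bernoulli term is $P(t)(1-P(t))$, and by Chebyshev's inequality the deviation $|P(t)-P_N(t)|$ is at most $\bigl(P(t)(1-P(t))/(N\varepsilon)\bigr)^{1/2}$ with probability at least $1-\varepsilon$. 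To make the bound computable I would then replace the unknown true variance $P(t)(1-P(t))$ by its empirical analogue $\wh P_N(t)(1-\wh P_N(t))$, which accounts for the first term on the right-hand side of \eqref{error_in_dist}; the small additive $1/(2N\varepsilon)$ correction is there precisely to absorb the discrepancy between $P_N$ and $\wh P_N$ inside the variance estimate.

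For the discretization term $P_N(t)-\wh P_N(t)$, the key observation is that the two sample distributions differ only through the shift $Q(u;\bs\theta^n)$ versus $Q(\wt U^n;\bs\theta^n)$ in each indicator. I would write the difference as
\begin{equation*}
P_N(t)-\wh P_N(t)=\frac1N\sum_{n=1}^N\Bigl(\mathbf{1}_{(-\infty,t]}(Q(u;\bs\theta^n))-\mathbf{1}_{(-\infty,t]}(Q(\wt U^n;\bs\theta^n))\Bigr),
\end{equation*}
and note that the $n$-th summand is nonzero only when $t$ lies between $Q(u;\bs\theta^n)$ and $Q(\wt U^n;\bs\theta^n)$. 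Using the error estimate $\mathcal{E}^n\approx Q(u^n;\bs\theta^n)-Q(\wt U^n;\bs\theta^n)$ from \eqref{error_estimate_assumed}, the true QoI satisfies $Q(u^n;\bs\theta^n)\in[\,Q(\wt U^n)-|\mathcal{E}^n|,\,Q(\wt U^n)+|\mathcal{E}^n|\,]$, so a summand can flip only if $t-Q(\wt U^n;\bs\theta^n)$ falls in $[-|\mathcal{E}^n|,|\mathcal{E}^n|]$. Counting such indices and using $|\mathbf{1}-\mathbf{1}|\le 1$ bounds this term by $\frac1N\bigl|\sum_{n=1}^N\mathbf{1}_{[-|\mathcal{E}^n|,|\mathcal{E}^n|]}(t-Q(\wt U^n;\bs\theta^n))\bigr|$, matching the middle term of \eqref{error_in_dist} up to the factor of $2$.

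Combining the two bounds by the triangle inequality yields \eqref{error_in_dist}. I expect the main obstacle to be the passage from the true variance to the empirical one: strictly speaking, replacing $P(t)(1-P(t))$ with $\wh P_N(t)(1-\wh P_N(t))$ introduces an error that must itself be controlled within the same probability event, and reconciling this with the $1/(2N\varepsilon)$ slack while keeping the overall failure probability at $\varepsilon$ requires care. A clean way to handle it is to bound $|P(t)(1-P(t))-P_N(t)(1-P_N(t))|$ and then $|P_N-\wh P_N|$ on the already-established high-probability event, folding everything into the additive constant. Since the detailed argument appears in \cite{estep2008nonparametric,estep2009nonparametric}, I would cite those references for the precise constant bookkeeping rather than reproduce it in full.
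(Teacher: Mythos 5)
Your proposal is correct and follows essentially the same route as the paper, which itself offers no proof beyond stating the decomposition $P(t)-\wh{P}_N(t) = (P(t)-P_N(t)) + (P_N(t)-\wh{P}_N(t))$ and citing \cite{estep2008nonparametric,estep2009nonparametric}; your Chebyshev bound for the sampling term and the indicator-counting argument for the discretization term are exactly the standard arguments behind that decomposition in the cited references. Your closing remark correctly identifies the only delicate point (trading the true variance for the empirical one while absorbing the slack into the $1/(2N\varepsilon)$ term), and deferring that bookkeeping to the references matches what the paper does.
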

The first term on the right of \eqref{error_in_dist} is a standard bound on error arising from finite sampling. There is a discussion of such bounds in \cite{estep2008nonparametric,estep2009nonparametric} as well as computations illustrating its accuracy.  The second term quantifies bias in the computed distribution arising from numerical error as ``shifts'' in the distribution function. Evaluating the second term requires a computational error estimate for each sample value.

\subsection{Numerical experiments}

\subsubsection{Poisson Equation}
\label{sec:num_exp_cdf_poisson}
We present a numerical experiment showing the behavior of the complete error estimate as the number of samples is increased and the finite element mesh is refined. Figure \ref{exp_2_fig_1_a} shows the error bound and the actual error in the CDF for the example in \S \ref{sec:motivating_poisson} with $\epsilon=0.05$. The error bound is around six times larger than the actual error. Figure \ref{exp_2_fig_1_b}  shows the stochastic and the discretization contributions. Increasing the number of samples or refining the mesh both decrease the error in the CDF as observed in Figure \ref{exp_2_fig_2}. The figure also indicates that adding more samples decreases the variance, making the CDF smoother, while refining the mesh primarily targets the discretization contribution. 

\begin{figure}[htbp]
	\begin{center}
		 \subfigure[]{\includegraphics[width=0.42\textwidth]{./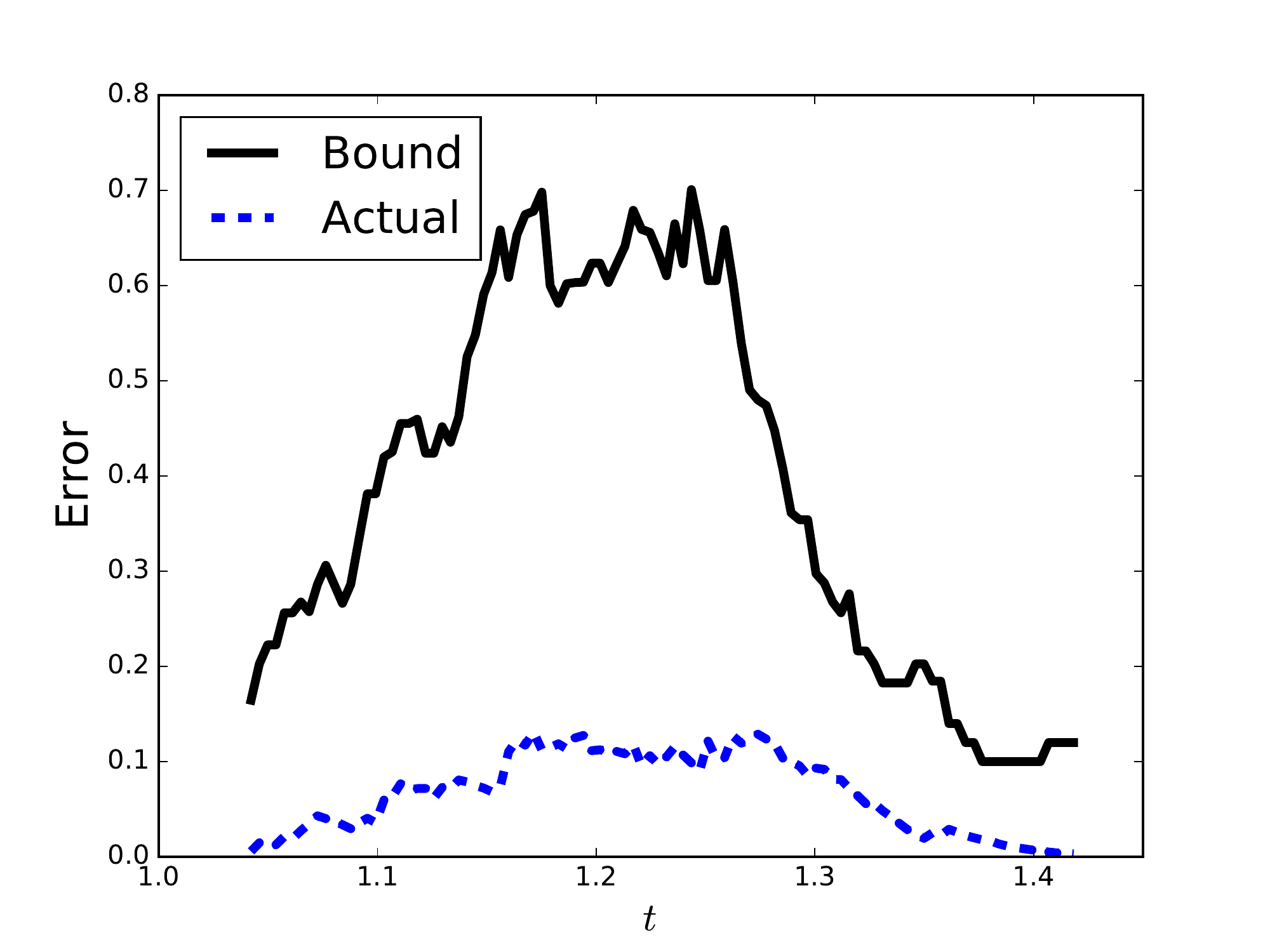} \label{exp_2_fig_1_a}} 
		 \subfigure[]{\includegraphics[width=0.42\textwidth]{./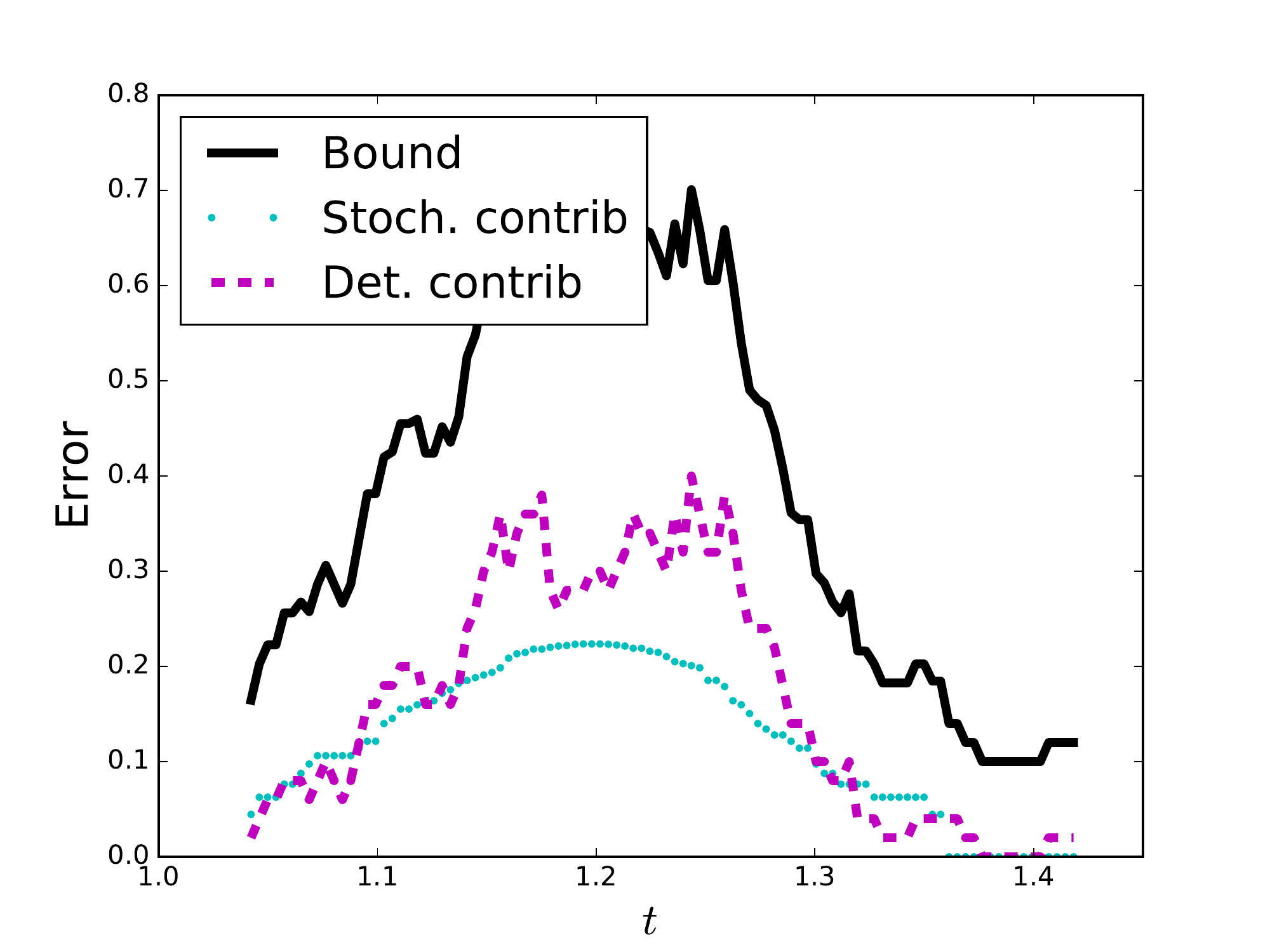} \label{exp_2_fig_1_b}} 
		\caption{
		(a) Actual error and bound for distribution  computed using $\tilde{h} = 1, N = 100$. (b) The solid, dashed and dotted lines indicate the total error bound, the discretization contribution and the stochastic contributions.}
		\label{exp_2_fig_1}
	\end{center}
\end{figure}

\begin{figure}[htbp]
	\begin{center}
 \subfigure[]{\includegraphics[width=0.42\textwidth]{./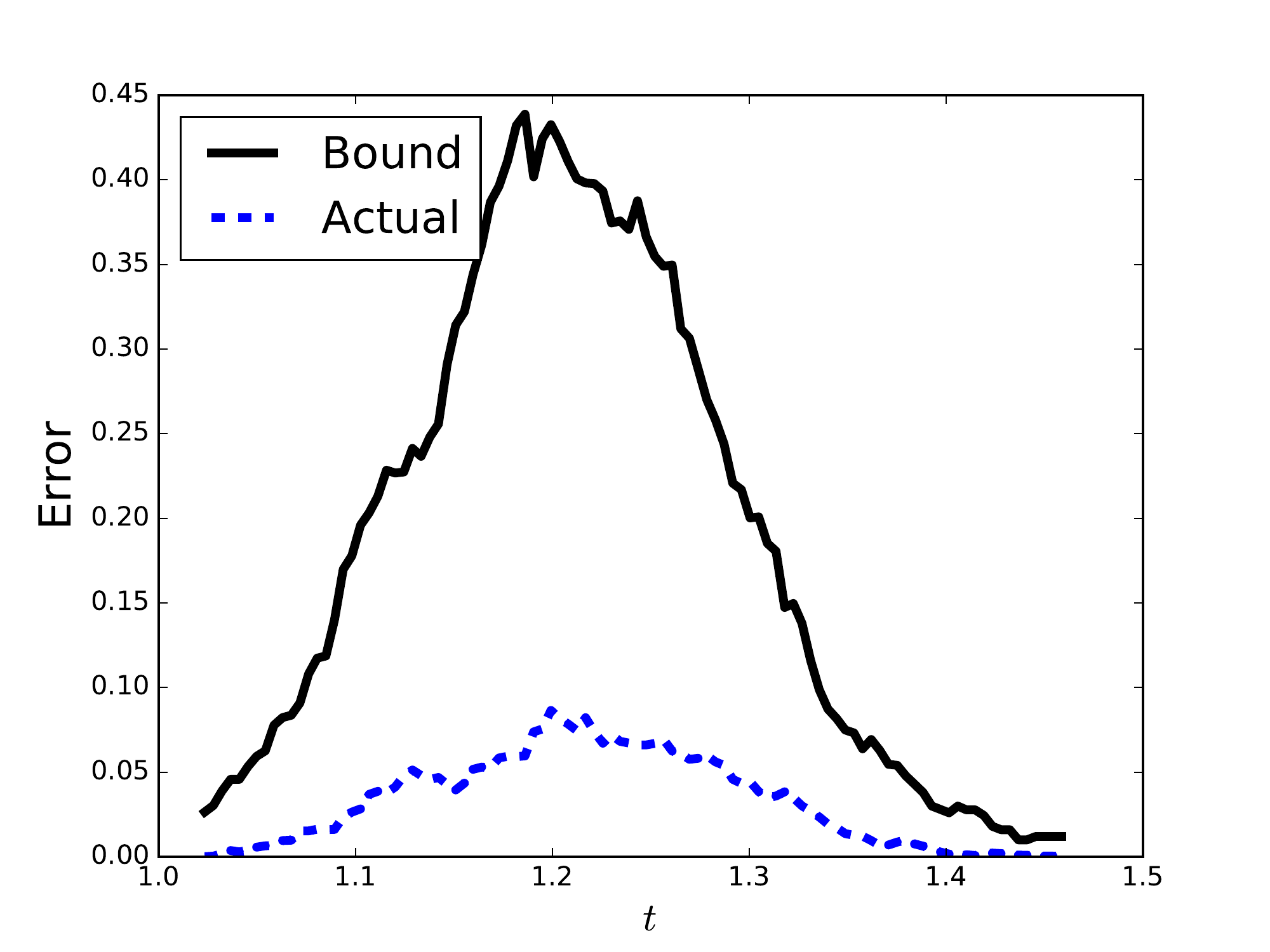} \label{exp_2_fig_2_a}} 
 \subfigure[]{\includegraphics[width=0.42\textwidth]{./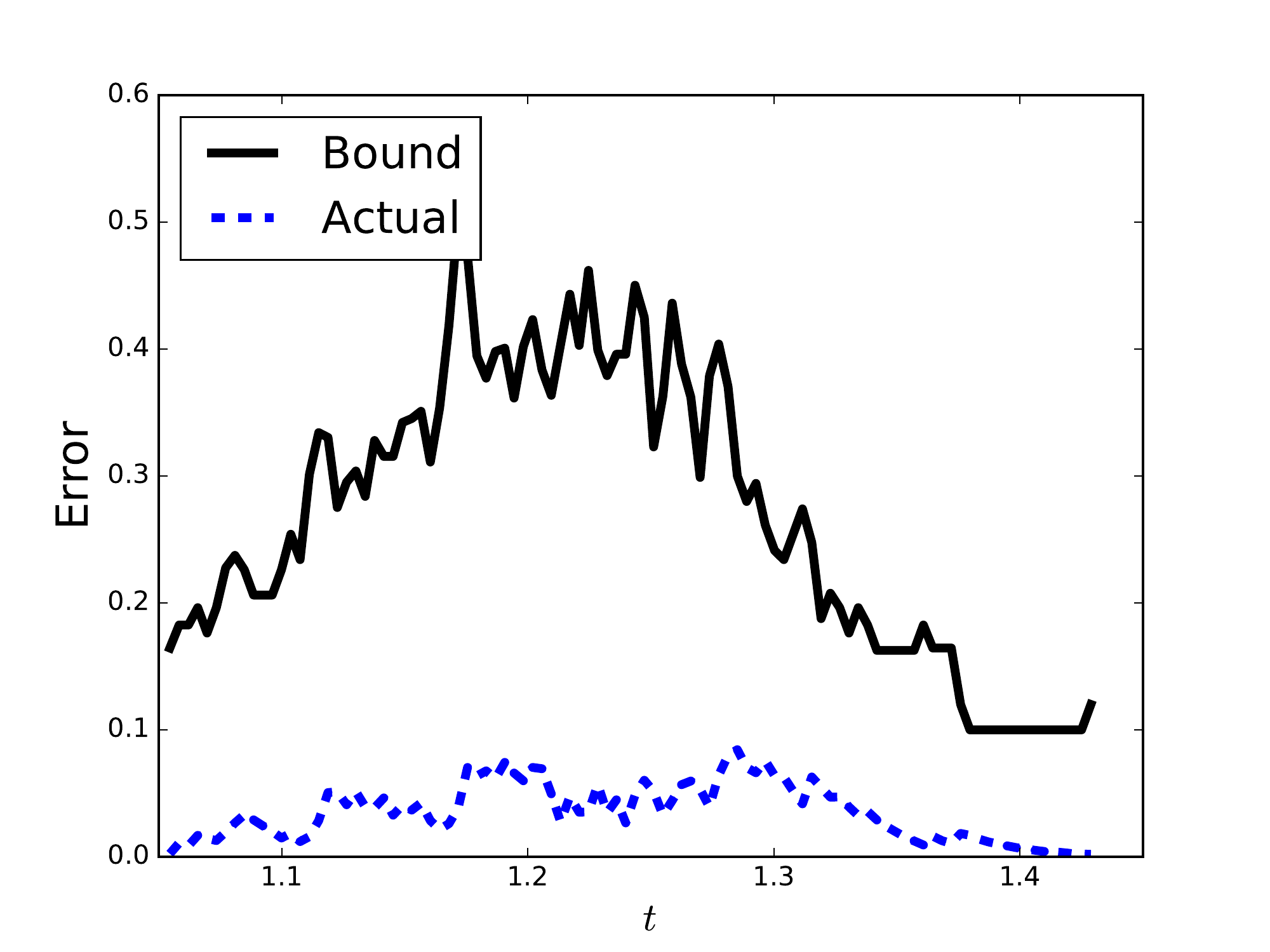} \label{exp_2_fig_2_b}}
\caption{
		 Actual error and bound. From left to right, distributions are computed using  $\tilde{h} = 1, N = 1000$ and $\tilde{h} = 0.5, N = 100$. } 
		\label{exp_2_fig_2}
	\end{center}
\end{figure}	

\subsubsection{Convection-Diffusion Equation}
\label{sec:num_exp_cdf_cd}

A similar experiment is performed for the convection-diffusion problem in \S~\ref{sec:motivating_cd}. The weak form of the  adjoint equation to \eqref{eq:cd_trans_domain} is
\begin{equation*}
   \int_{\Omega} \mathbf{A}^n\nabla \eta^n \cdot \nabla v  + \hat{\mathbf{b}}^n\cdot \nabla v \, \eta^n \, \mathrm{d}y = \int_{\Omega} \psi , v \, \mathrm{d}y, \quad \forall v \in H_0^1(\Omega)
\end{equation*}
The corresponding error representation  (compare to \eqref{eq:err_est_trans}) is
\begin{equation}
\label{eq:err_est_trans_cd}
Q(e^n; \bs \theta) = \int_\Omega F^n \eta^n \, \mathrm{d}y - \int_\Omega \mathbf{A}^n \nabla U^n \cdot \nabla \eta^n \,\mathrm{d}y - \hat{\mathbf{b}}^n\cdot \nabla U^n \, \eta^n.
\end{equation}

Figure \ref{exp_cd_fig_2_a} shows the error bound and the actual error in the CDF. The error bound is around six times larger than the actual error. Figure \ref{exp_cd_fig_2_b}  shows the stochastic and the discretization contributions.

\begin{figure}[htbp]
	\begin{center}
		 \subfigure[]{\includegraphics[width=0.42\textwidth]{./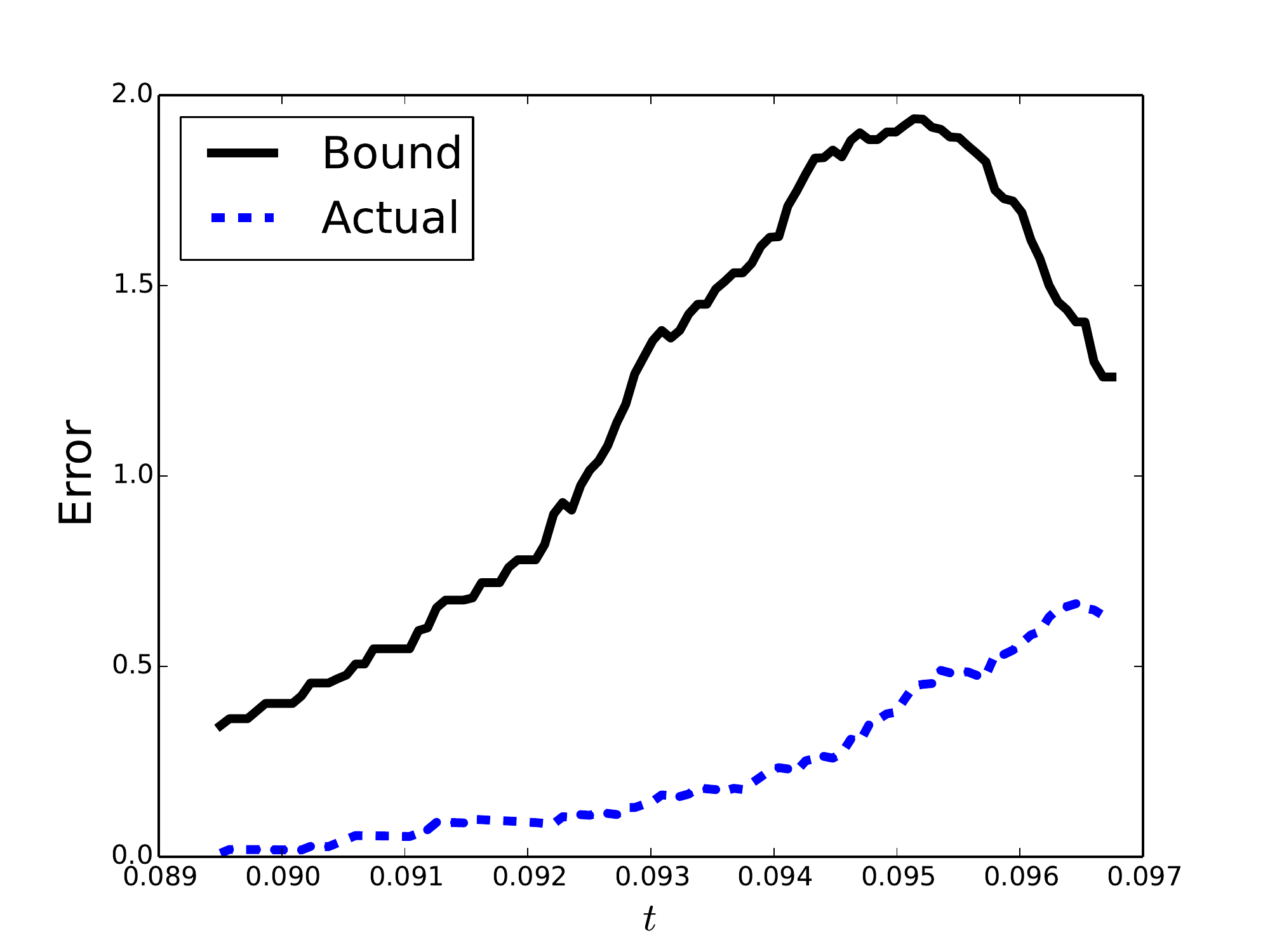} \label{exp_cd_fig_2_a}} 
		 \subfigure[]{\includegraphics[width=0.42\textwidth]{./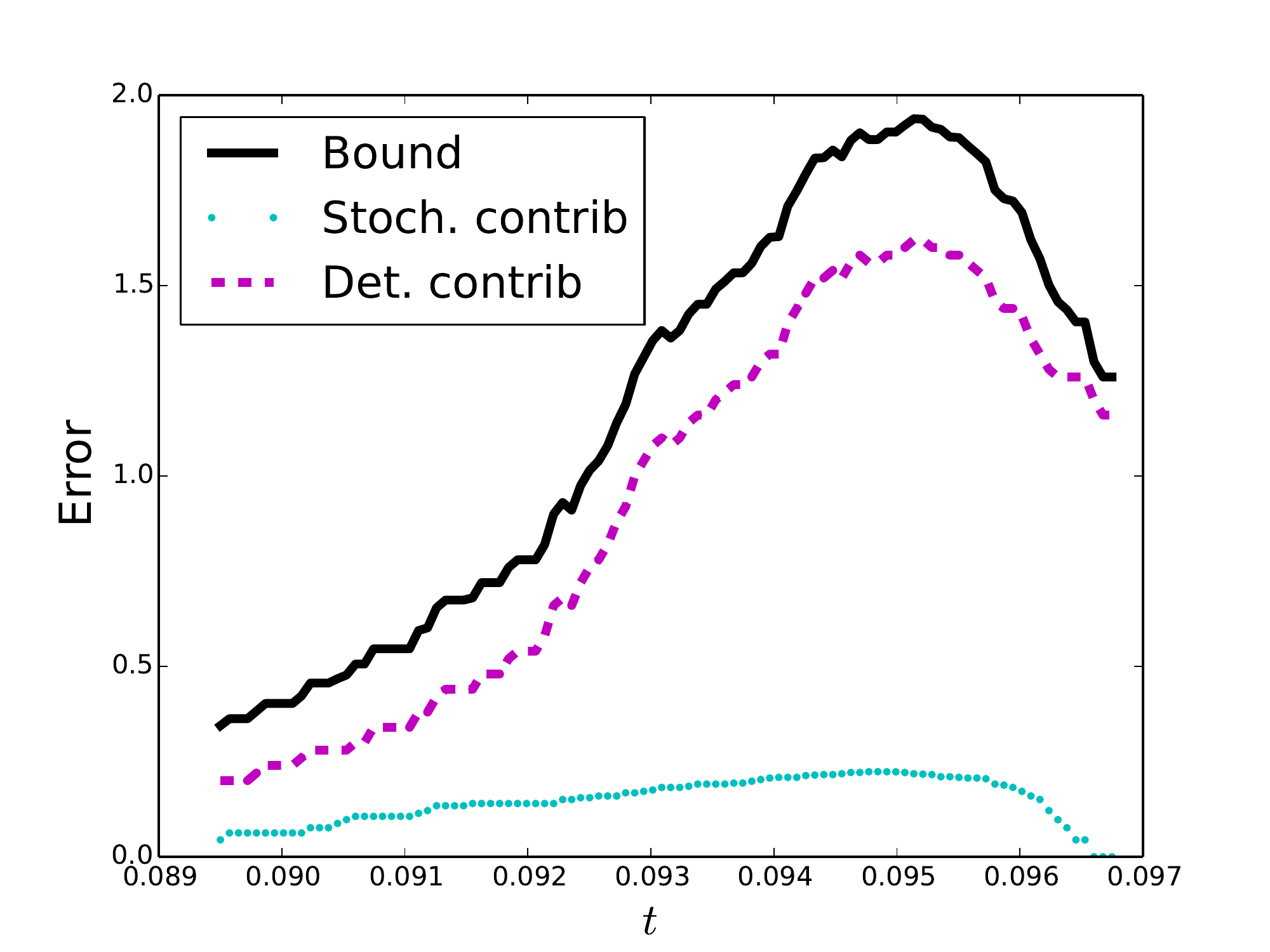} \label{exp_cd_fig_2_b}} 
		\caption{
		(a) Actual error and bound for distribution  computed using $\tilde{h} = 1, N = 100$. (b) The solid, dashed and dotted lines indicate the total error bound, the discretization contribution and the stochastic contributions.}
		\label{exp_cd_fig_2}
	\end{center}
\end{figure}

\section{Adaptive Mesh Refinement}
\label{sec:AMR}
We note that the use of a different triangulation $\mathcal{T}_h^n$ for each sample $\bs\theta^n$ is indicated in situations in which the solution behavior - and therefore numerical accuracy - depends on the shape of the domain $\Omega(\bs\theta^n)$. On the other hand, meshing is often a computationally demanding aspect of discretization, so there is incentive to employ one mesh for all samples. As an alternative to the use of a computationally inefficient heavily refined uniform mesh, we now describe an adaptive algorithm  that attempts to optimize across samples.

A posteriori error estimates are well-suited to guide adaptive mesh refinement algorithms \cite{estep2000estimating}. The general approach is iterative: Start with a coarse mesh, compute the solution, evaluate the estimate, use the estimate to select elements for refinement, refine the mesh, and then iterate.

Nominally, a straightforward adaptive algorithm based on the a posteriori
error estimate would  require computing a sequence of adaptive meshes for each sample computation. This would appear to be inefficient however.  We describe a ``greedy'' adaptive
algorithm that seeks to find an adaptive mesh that works for all
realizations at the cost of some loss of efficiency. The idea is to
construct an adapted mesh for the first realization. We use the
adapted mesh as the initial mesh for the computation of the second realization. The initial mesh is refined (but not coarsened) as necessary to control the error in the second sample.  This repeats, where at each step the
mesh is further refined and is described in Algorithm~\ref{find_mesh_algorithm}. In practice, we find that within 10-50 iterations, the algorithm constructs a mesh that works for all subsequent realizations.

\begin{minipage}{0.7\textwidth}
\begin{algorithm}[H]
	Start with an initial (uniform) mesh $\mathcal{T}_h^0$ \\
	\For{$n = 1,\hdots,M$ (loop through the set of $M$ test problems)}{
		$\mathcal{T}_h^n =\mathcal{T}_h^{n-1}$ \\
		Compute solution $U^n$ and error estimate $\mathcal{E}^n$\\
		\While{$|\mathcal{E}^n| > \mathrm{TOL}$}{	  
				
		  	Refine the mesh, i.e., $\mathcal{T}_h^n = \text{refine}(\mathcal{T}_h^{n})$ \\
		  	Compute solution $U^n$ and error estimate $\mathcal{E}^n$} 

	}
\caption{Constructing a  universal finite element mesh}
\label{find_mesh_algorithm}
\end{algorithm}
\end{minipage}

Algorithm~\ref{find_mesh_algorithm} is demonstrated for the problem in \S \ref{sec:num_exp_cdf_cd}. The per element error indicator $E_K$ measures the error on element $K$ and is defined as
\begin{equation}
\label{eq:err_ind}
E_K = \left| \int_K F^n \eta^n  -  \mathbf{A}^n \nabla U^n \cdot \nabla \eta^n \,\mathrm{d}y - \hat{\mathbf{b}}^n\cdot \nabla U^n \, \eta^n \right|.
\end{equation}
The D\"{o}rfler strategy is used for marking elements with large elemental error indicators~\cite{Do96}. If the QoI error at a parameter value is above a specified tolerance, which is taken to be 0.0004 for this example, then the mesh is adapted for that parameter till the error falls below the tolerance, $\mathrm{TOL}$.
The mesh is only refined for four parameter values, and after iteration 28, there is no further refinement for the remaining 972 iterations as shown in Figure \ref{exp_adaptive_fig_1}. \tpurp{The refinement pattern indicates that the mesh is refined around the region $[0.50, 0.75] \times [0.50,0.75]$ where the support of the QoI function $\psi$ lies and to the right of this region. The mesh is not refined around all the boundary nodes even though they are all perturbed. This is because  the error indicator \eqref{eq:err_ind} describes the sensitivity of the QoI to different parts of the domain and for this example the refined region contributes the most to the error. The results may be explained heuristically by noting that the direction of the vector field $\mathbf{b}$ is from right to left, and hence the QoI is affected the most from the region to its right.}

\begin{figure}[htbp]
	\begin{center}
		 \subfigure[]{\includegraphics[width=0.31\textwidth]{./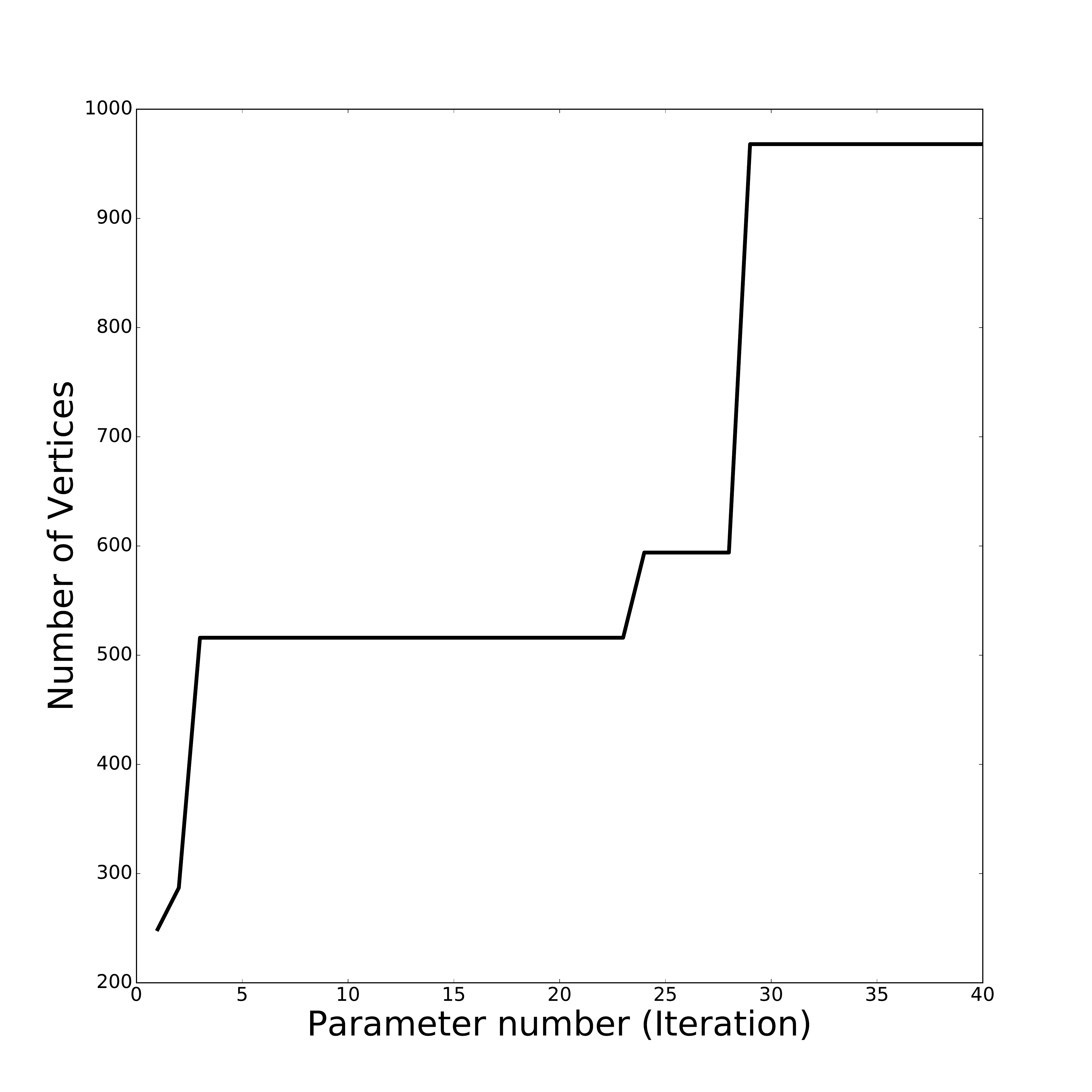}}
		\subfigure[iteration 0]{\includegraphics[width=0.31\textwidth]{./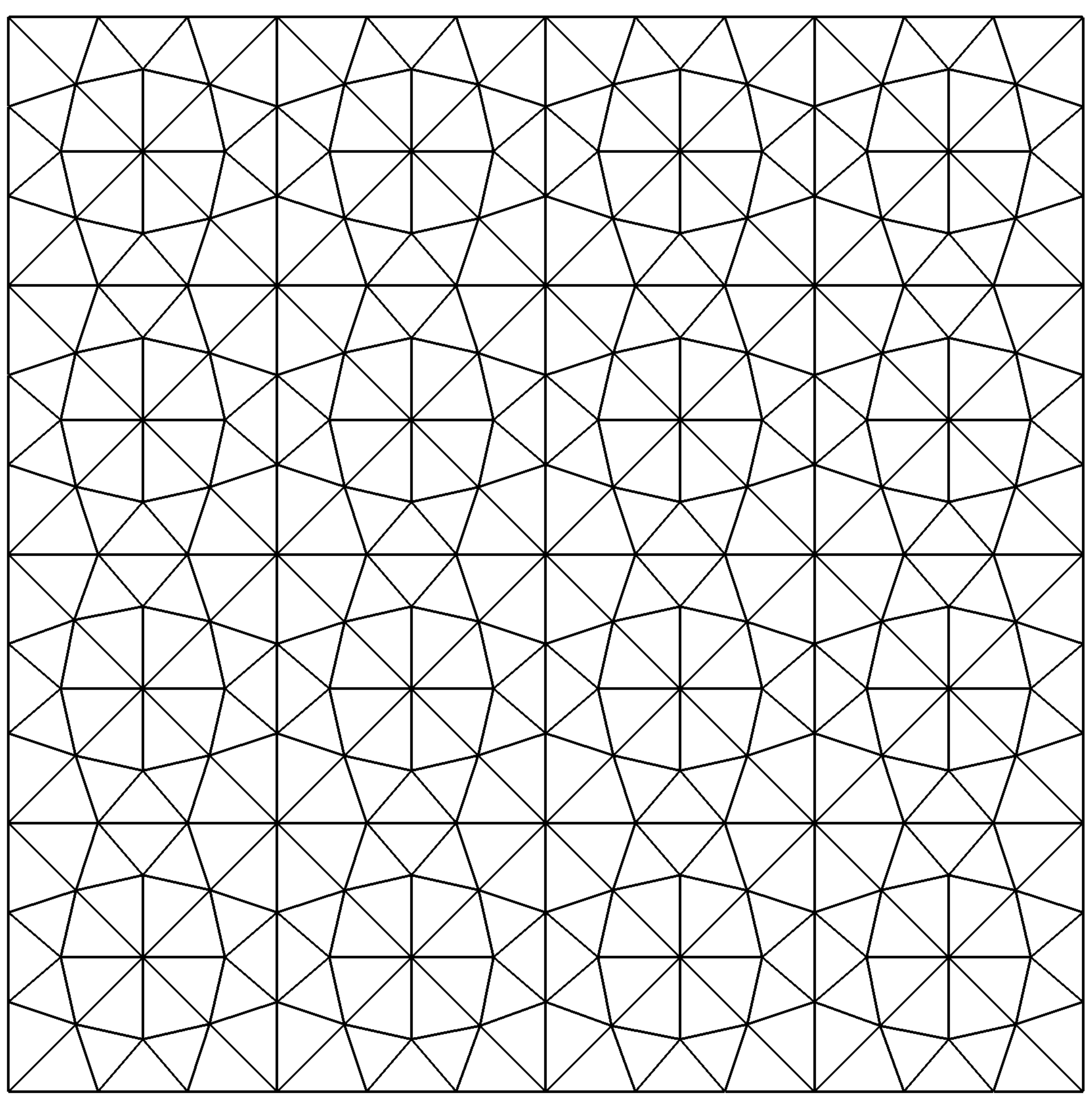}}
		\subfigure[iteration 41]{\includegraphics[width=0.31\textwidth]{./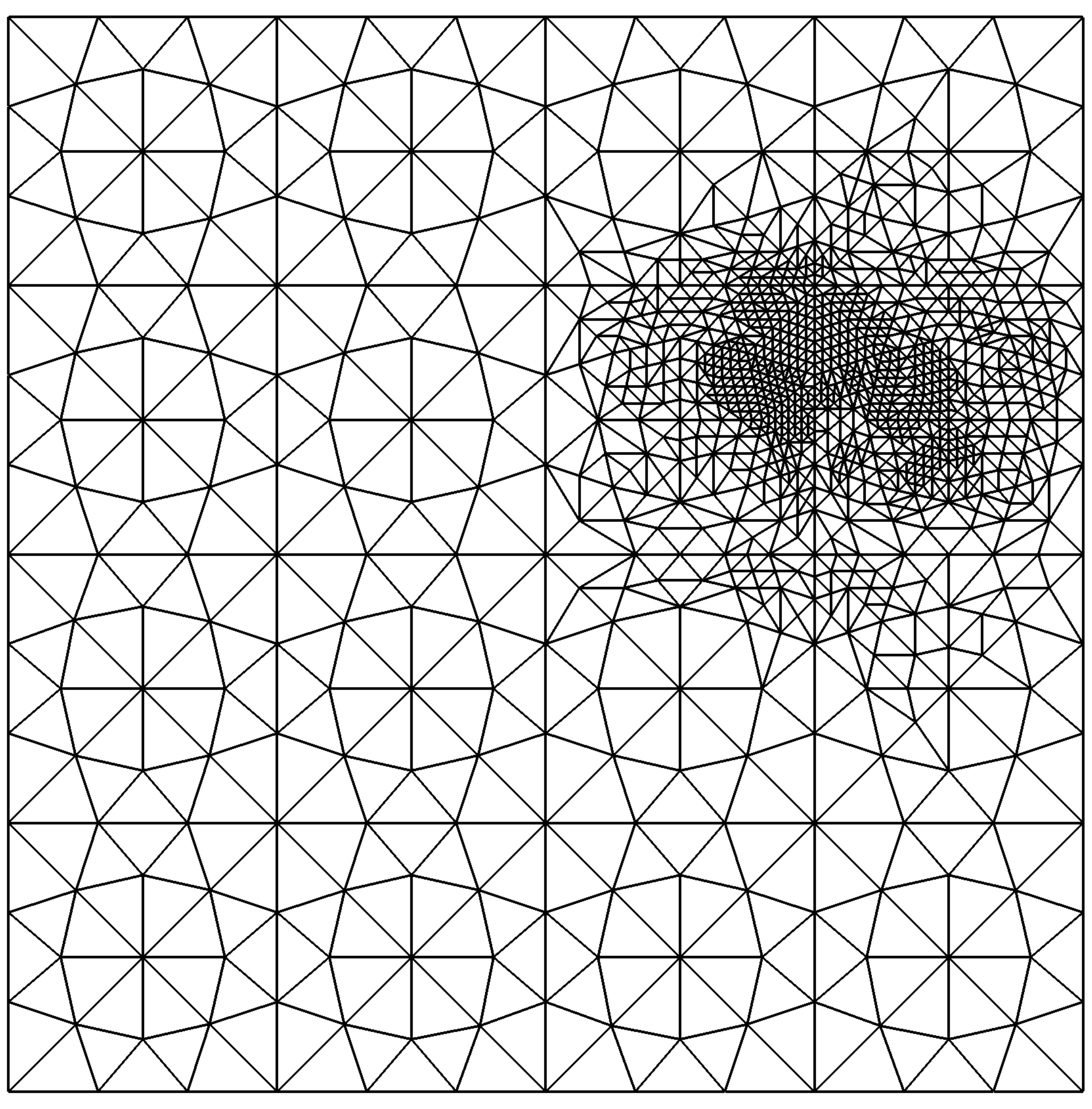}}
		\caption{
		In panel (a) The number of vertices in the finite element mesh is plotted against parameter number.
		Panels (b) and (c) show the initial mesh and the final refined mesh respectively.} \label{exp_adaptive_fig_1}
	\end{center}
\end{figure}

The CDF computed using the adaptive algorithm with 1000 samples is plotted in Figure~\ref{exp_adaptive_fig_2_a}. This CDF compares well with the CDF computed a twice uniformly refined mesh corresponding to $\tilde{h}=0.25$ with 4000 samples (labeled ``Actual'' in the plot). The adaptive algorithm is carried out on a mesh of 992 vertices (after 41 iterations) compared to 945 vertices for the mesh corresponding to $\tilde{h}=0.5$. Although the number of vertices in the meshes are similar, and hence the computational effort is similar, the CDF computed using the adaptive method is significantly more accurate. The dramatic increase in the accuracy of the CDF is explained by observing the error contributions in Figure~\ref{exp_adaptive_fig_2}. Now the discretization contribution is significantly than the CDF computed with no refinement, e.g. see Figure~\ref{exp_cd_fig_2_b}.
\begin{figure}[htbp]
	\begin{center}
		\subfigure[]{\includegraphics[width=0.42\textwidth]{./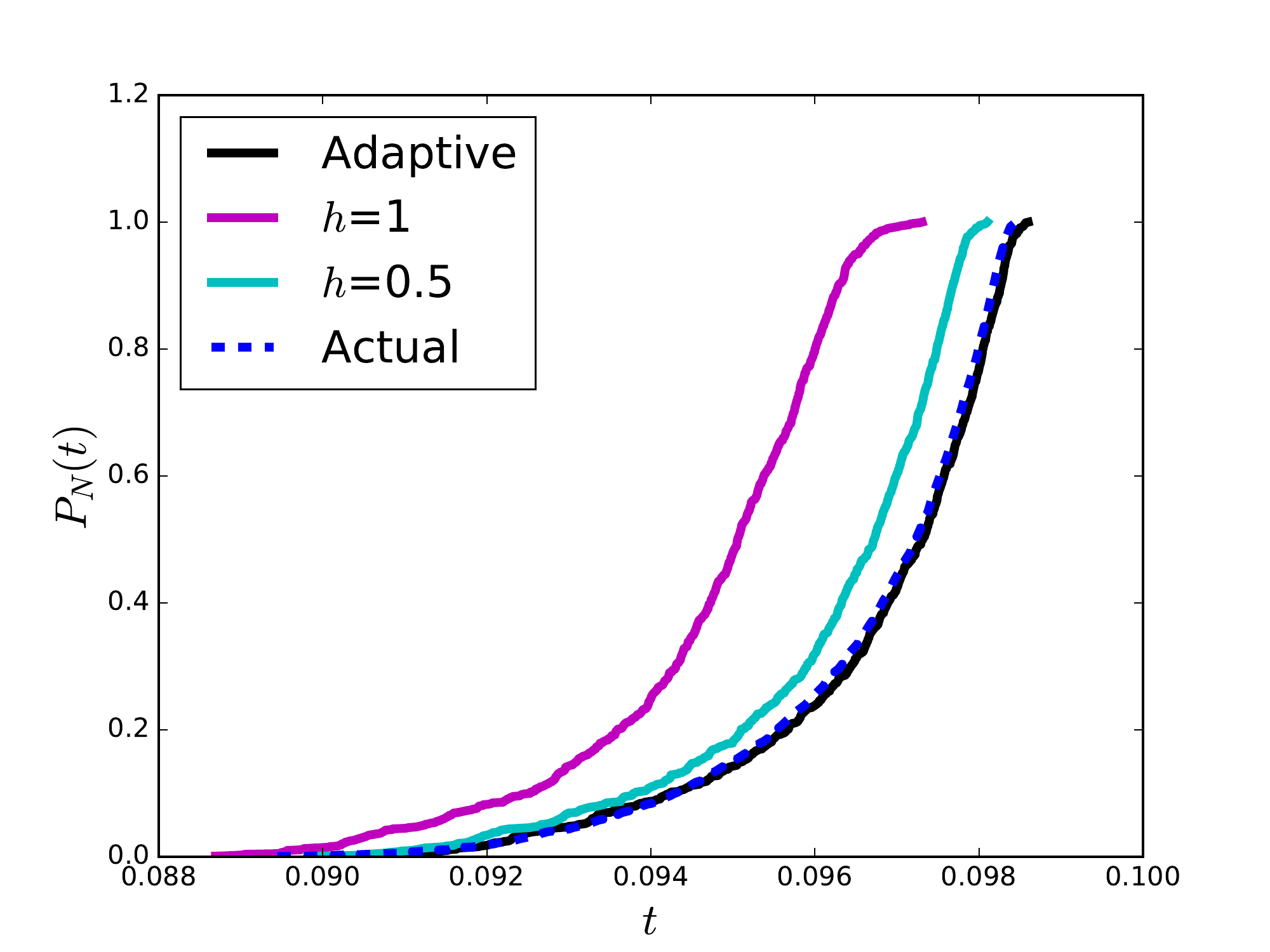}\label{exp_adaptive_fig_2_a}}	
		\subfigure[]{\includegraphics[width=0.42\textwidth]{./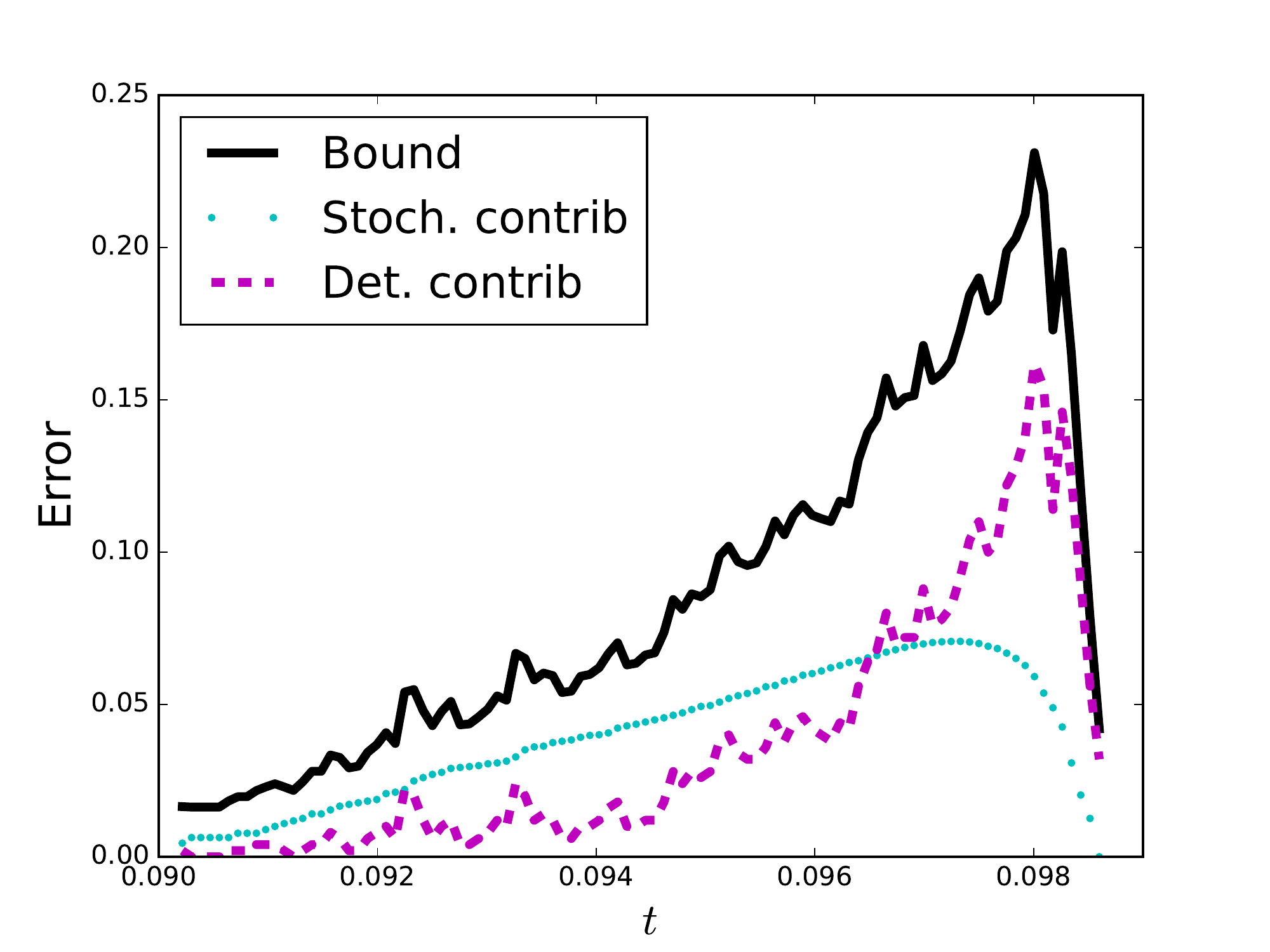}\label{exp_adaptive_fig_2_b}}
		\caption{
		 (a) CDF computed using adaptive refinement with 1000 samples (solid line) against the CDF computed using a $\tilde{h} = 1, N = 1000$, $\tilde{h}=0.5,N=1000$ and ``Actual'' CDF computed thrice uniformly refined mesh ($\tilde{h} = 0.125$) and 4000 samples.
		(b) Error bound and contributions.} \label{exp_adaptive_fig_2}
	\end{center}
\end{figure}

\section{Summary}

We study elliptic partial differential equation boundary value problems for which the domain where the equations
are posed is uncertain. In order to model this uncertainty, we formulate a class of problems that are posed on stochastic domains.
Of particular interest is the nonparametric density estimation problem for a quantity of interest.
We introduce a piecewise transformation of the domain to a deterministic reference domain
and exploit for efficiency the transformation in a Monte Carlo sampling procedure so that many samples
can be obtained to approximate the distribution at a reasonable cost.
We present an a posteriori error analysis for each sample and for the empirical distribution function obtained from the samples,
which reflect the various deterministic and statistical sources of error.
The estimate is sufficiently detailed that we can efficiently balance computational
work, e.g. mesh resolution versus sample numbers, to achieve a desired
accuracy. An interesting issues that arises in the context of numerical solution
is that each realization of a domain nominally requires construction of a new
discretization mesh, at a significant computational cost.
We address computational efficiency by describing an adaptive
strategy that leads to a mesh that produces acceptable accuracy for
all realizations of the problem and describing an iterative
solution algorithm in which the number of matrix inversions is
independent of the number of samples.

\bibliographystyle{plain}
\bibliography{random_domain_bib_2}

\end{document}